\documentclass[10pt, a4paper]{article}
\usepackage[T2A]{fontenc}
\usepackage[english]{babel}
\usepackage{amssymb,amsmath,amsfonts,amsthm,color}
\usepackage{graphicx, srcltx, dsfont}

\usepackage[vmargin={3.0cm,2.0cm},hmargin={3.0cm,2.0cm},includefoot]{geometry}

\theoremstyle{plain}
\newtheorem{theorem}{Theorem}
\newtheorem{lemma}{Lemma}

\theoremstyle{definition}

\numberwithin{equation}{section}


\def\Om{\Omega}

\def\g{\gamma}
\def\G{\Gamma}
\def\l{\lambda}
\def\p{\partial}

\def\a{\alpha}
\def\b{\beta}

\def\L{\Lambda}

\def\hf{\mathfrak{h}}

\def\Op{\mathcal{H}}

\def\cR{\mathcal{R}}

\def\cE{\mathcal{E}}

\def\cB{\mathcal{B}}
\def\cL{\mathcal{L}}
\def\cT{\mathcal{T}}

\def\cI{\mathcal{I}}


\DeclareMathOperator{\RE}{Re}
\DeclareMathOperator{\IM}{Im}

\renewcommand{\geq}{\geqslant}

\allowdisplaybreaks

\begin{document}

\begin{center}
\textbf{Spectral properties of Schr\"odinger operator with translations and Neumann
boundary conditions}
\end{center}
\begin{center}
D.I.~Borisov$^{1,2},$
D.M.~Polyakov$^{1,3}$
\end{center}
\begin{quote}
1) Institute of Mathematics, Ufa Federal Research Center, RAS,
\\
Ufa, Chernyshevsky
str. 112, 450008, Russia,
\\
2) Peoples Friendship University of Russia (RUDN University),
\\
 Moscow, 117198, Miklukho--Maklaya str. 6,  Russia,
\\
3) Southern Mathematical Institute, Vladikavkaz Scientific Center of RAS,
\\
Vladikavkaz,
362025, Vatutin str. 53, Russia.
\\
Emails:  borisovdi@yandex.ru, dmitrypolyakow@mail.ru.
\end{quote}
\begin{abstract}
We consider a nonlocal differential--difference Schr\"odinger operator on a segment
with the Neumann conditions and two translations in the free term. The values of the translations are denoted by $\alpha$ and $\beta$ and are treated as parameters.  The spectrum of this operator consists of countably many discrete eigenvalues, which  are taken in the ascending order of their absolute values and are indexed by the natural parameter $n.$ Our main result is the representation of the eigenvalues as convergent series in negative powers of $n$ with the coefficients depending on $n,$ $\a,$ and $\b.$ We show that these series converge absolutely and uniformly in $n,$ $\a,$ and $\b$
and they can be also treated as spectral asymptotics for the considered operator with uniform in $\a$ and $\b$ estimates for the error terms. As an example,  we find the four--term spectral asymptotics for the eigenvalues   with the error term of order
$O(n^{-3}).$ This asymptotics involves additional nonstandard
terms and exhibits a non--trivial high--frequency phenomenon generated by the translations.
We also establish that the system of eigenfunctions and generalized eigenfunctions
of the considered operator forms the Bari basis in the space of functions square integrable on the unit
segment.

\bigskip

MSC: 34K08
\end{abstract}

\normalsize
\sloppy
\vspace{5pt}

\section{Introduction}

Differential--difference equations involve a differentiation operator  and  a translation operator. 
The solutions to these equations exhibit interesting non--classical properties generated by the presence of the translation operators. 
There are many works devoted to studying differential--difference equations.
Most of them develop qualitative theory and establish results on well--posedness of boundary and initial
problems for differential--difference equations, solvability issues, and the
behavior of solutions, see the book \cite{Skub_Book}, the papers
\cite{SkubUMN}, \cite{NevSkub}, \cite{SkubMathNach}, \cite{LiikoSkub},
\cite{Muravnik11}, \cite{Kamenskii} and the references therein.
Necessary and sufficient conditions for preserving the smoothness of
generalized eigenfunctions over the entire interval were obtained in \cite{VorotSkub}.
Evolutionary differential--difference equations were investigated quite intensively (see,
for instance, \cite{Muravnik1}, \cite{Muravnik2}, \cite{Muravnik3}, \cite{Muravnik4},
\cite{Muravnik5}).

Similar nonlocalities in differential equations can be produced by the
  dilatation and contraction operators. The properties of solutions to various
problems for such equations were studied in \cite{Rosov1}, \cite{Rosov2},
\cite{Rosov3}, \cite{Rosov4}.

The spectral properties of differential--difference operators
were investigated in \cite{SkubUMN} and the main results  were the completeness
and minimality of the system of eigenfunctions and generalized eigenfunctions
for elliptic differential--difference operators.

Inverse problems for the Sturm~---~Liouville operator with a constant translation and two--point
boundary conditions were considered in \cite{FreilingYurko}, \cite{Yang},
\cite{Bond_Yurko}, \cite{DjuricVladicic}, \cite{WangShiehMiao}, \cite{DjuricButerin1},
\cite{DjuricButerin2}. Some aspects of inverse problems for Sturm~---~Liouville type
operators with several delays were studied in \cite{Shahriari1}, \cite{Shahriari2},
\cite{VojvodicPikula}. Necessary and sufficient conditions for solvability of
functional--differential pencils possessing a general form along with a nonlinear
dependence on the spectral parameter were considered in \cite{ButerinMalShieh}.

A resolvent convergence for general operators with small variable translations were considered
in \cite{Borisov_Polyakov}. In this paper we studied elliptic operators of even order $2m$
in arbitrary domains with general boundary conditions. The operators involved small variable translations in lower order terms and boundary conditions. The main result of this work established the uniform resolvent
convergence in the norm of operators acting from $L^2$ to $W_2^m$ and estimates for the convergence rate. The convergence of the spectra and pseudospectra of perturbed operators to similar sets of limit operators was shown.

In \cite{Borisov_PolyakovDAN} we considered the Schr\"odinger operator on a segment with the Dirichlet condition, which involved a translation operator in the free term; the translation parameter was supposed to be small. The main result was the uniform spectral asymptotics, that is, the asymptotics for the eigenvalues in their index with an error term uniform in the translation parameter. Similar results were obtained in \cite{Borisov_PolyakovUMJ} for a model problem with the Dirichlet condition at one end--point and the Neumann condition at the other. The differential expression in \cite{Borisov_PolyakovUMJ} had constant coefficients and involved a small translation. The case of Neumann condition was addressed in \cite{Borisov_PolyakovIZV}, and the periodic boundary conditions were treated in \cite{RJMP2025}.

In the present paper we consider  a nonlocal differential--difference
Schr\"odinger operator on a segment with the Neumann condition, which involves two
translations in the free term. The translations are arbitrary and are not supposed to be small; we denote the values of translations by $\a$ and $\b.$ The spectrum of this operator consists of countably many discrete eigenvalues, which  are taken in the ascending order of their absolute values and are indexed by the natural parameter $n.$ Our main result provides   the representation of the eigenvalues as convergent series in negative powers of $n$ with the coefficients depending on $n,$ $\a,$ and $\b.$ We show that these series converge absolutely and uniformly in $n,$ $\a,$ and $\b$
and they can be also treated as spectral asymptotics for the considered operator with uniform in $\a$ and $\b$ estimates for the error terms. As an example,  we find the four--term spectral asymptotics for the eigenvalues with the error term of order
$O(n^{-3}).$ This asymptotics involves additional non--standard
terms and exhibits a non--trivial high--frequency phenomenon generated by the translations. We also establish that the system of eigenfunctions and generalized eigenfunctions of the considered operator forms the Bari basis in the space of functions square integrable on the unit segment.

To the best of our knowledge, earlier uniform spectral properties were obtained for the problem
\begin{equation}\label{0.1}
i\varepsilon y''+q(x)y=\lambda y, \qquad \qquad y(a)=y(b)=0,
\end{equation}
where $\lambda$ is a spectral parameter and $\varepsilon$ is a small parameter.
The pseudospectra of \eqref{0.1} was investigated in \cite{Reddy} and \cite{Trefethen}.
Uniform spectral asymptotics for the problem \eqref{0.1} in the case $q(x)=x$
and $q(x)=x^2$ were found in \cite{ShkalikovNotes97} and \cite{TumanovShkalikov02}, see also \cite{Stepin1}, \cite{Stepin2}, \cite{Morawetz},
\cite{DyachenkoShkalikov}, \cite{Shkalikov03},  \cite{Chapman}.
In this situation the spectrum is concentrated near a line consisting of three curves and resembling a tie.  In \cite{PokotiloShkalikov09} and \cite{IshkinMarv} these results were
extended to the situation when $q$ belongs to the class of analytical functions.
The case of periodic boundary conditions was considered in \cite{EsinaShafar1},
\cite{EsinaShafar2}. Note that if the positive parameter $\varepsilon$ is involved in
the differential expression instead of $i\varepsilon$, then we obtain a self--adjoint
problem with a small parameter. This is the well--known classical problem on quasiclassical approximation for the Schr\"odinger operator and here the spectrum is described by Bohr~---~Sommerfeld quantization, see  \cite{Shubin}. We also mention our recent work \cite{RJMP2024}, in which uniform spectral asymptotics were found for the Schr\"odinger operator on a segment with the Dirichlet condition, which was perturbed by a delta--interaction located at a point near one of the ends of segment.

The results of this paper were partially announced in \cite{LJM2024}.

\section{Problems and main results}

In the space $L^2(0, 1)$ we consider the self--adjoint operator  $\mathcal{A}y=-y''$
on the domain $$\mathfrak{D}(\mathcal{A})=\big\{y\in W_2^2(0, 1): y'(0)=y'(1)=0\big\}.$$   By
$\cE $ we denote the  operator of continuation by zero outside $(0, 1)$, while
$\mathcal{R}$ stands for the operator of restriction to $(0, 1)$.
The operator $\cE $ acts from $L^2(0, 1)$ into $L^2(\mathds{R})$ and
continues the function by zero outside $(0, 1)$, while the operator $\mathcal{R}$
acts $L^2(\mathds{R})$ into $L^2(0, 1)$ and makes the restriction:
\[
\cE y=
\begin{cases}
y \quad \text{in} \quad (0, 1), \\
0 \quad \text{outside} \quad (0, 1),
\end{cases}
\qquad \mathcal{R}y=y \quad \text{on} \quad (0, 1).
\]
We define the translation operator  $\mathcal{T}$ 
in $L^2(\mathds{R})$:
\[
(\mathcal{T}(\alpha) y)(x)=y(x+\alpha), \qquad  x\in (0, 1),
\]
where $\alpha\in [-1, 1]$ is a parameter.

Let $V$ and $Q$ be complex--valued functions belonging to the space $L_\infty (0, 1)$.
We introduce the operator $\mathcal{B}(\alpha, \beta)$ acting into $L^2(0, 1)$ by the rule
\begin{equation}\label{B}
\mathcal{B}(\alpha, \beta) =V\mathcal{R}\mathcal{T}(\alpha)\cE
+Q\mathcal{R}\mathcal{T}(-\beta)\cE,
\end{equation}
where $\alpha, \beta\in [0, 1]$.

The main object of this study is the nonlocal operator $\mathcal{H}(\alpha, \beta)
=\mathcal{A}+\mathcal{B}(\alpha, \beta)$
in $L^2(0, 1)$ on the domain $\mathfrak{D}(\mathcal{H}(\alpha, \beta)):=\mathfrak{D}(\mathcal{A})$.
For an arbitrary function $y\in\mathfrak{D}(\mathcal{H}(\alpha, \beta))$, the action of
$\mathcal{H}(\alpha, \beta)$ on $y$ reads as
\[
(\mathcal{H}(\alpha, \beta)y)(x)=-y''(x)+V(x)y(x+\alpha)+Q(x)y(x-\beta),
\]
where the function $y$ is supposed to be continued by zero outside the segment $[0, 1]$
and after that the translation it is restricted to the segment $[0, 1]$. For $\alpha=\beta=0$  this operator becomes the classical Schr\"odinger operator
\[
\mathcal{H}(0, 0)y=-y''+(V+Q)y
\]
in $L^2(0, 1)$ on the domain $\mathfrak{D}(\mathcal{H}(0, 0))=\mathfrak{D}(\mathcal{A})$. If $\alpha=\beta=1$, then $\mathcal{H}(1, 1)=\mathcal{A}$.

The aim of this paper is to study the behavior of the spectrum of   operator
$\mathcal{H}(\alpha, \beta)$ as $\alpha$, $\beta$ vary in $[0, 1]$.

 By $\|\,\cdot\,\|_{X\to Y}$ we denote the norm of a bounded operator acting from a Banach space $X$ into a Banach space $Y.$

Our first result describes the main properties of the spectrum
of the operator $\mathcal{H}(\alpha, \beta)$.

\begin{theorem}\label{th0}
The operator $\mathcal{H}(\alpha, \beta)$ is $m$--sectorial and the associated sesquilinear
form in the space $L^2(0, 1)$ is
\begin{equation}\label{2.1}
\mathfrak{h}(\alpha, \beta)[u, v]=(u', v')_{L^2(0, 1)}
+(V\mathcal{R}\mathcal{T}(\alpha)\cE u, v)_{L^2(0, 1)}
+(Q\mathcal{R}\mathcal{T}(-\beta)\cE u, v)_{L^2(0, 1)}
\end{equation}
on the domain $\mathfrak{D}(\mathfrak{h}(\alpha, \beta)):=\{u\in W_2^1(0, 1)\}$.
There exists a fixed number $\Lambda$ independent of $\alpha$ and $\beta$ such
that the half-plane $\{\lambda\in\mathds{C}: \RE \lambda\leqslant \Lambda\}$ is in the resolvent  set of the operator $\mathcal{H}(\alpha, \beta)$ for all $\alpha$, $\beta\in [0, 1]$. For each $\l\in\mathds{C},$ $\RE\l\leqslant \L$ the estimates
\begin{equation}\label{2.3}
\begin{aligned}
&\|(\Op(\a,\b)-\l)^{-1}\|_{L_2(0,1)\to W_2^2(0,1)}\leqslant C,
\\
&\|(\Op(\a_1,\b_1)-\l)^{-1}-(\Op(\a_2,\b_2)-\l)^{-1}\|_{L_2(0,1)\to W_2^2(0,1)}\leqslant C\big(|\a_1-\a_2|^\frac{1}{2}+|\b_1-\b_2|^\frac{1}{2}\big)
\end{aligned}
\end{equation}
hold, where $C$ are some constants independent of $\a,\,\b,\,\a_1,\,\b_1,\,\a_2,\,\b_2\in[0,1].$
The operator $\mathcal{H}(\alpha, \beta)$ has a compact resolvent and its spectrum consists of a countably many eigenvalues with the only accumulation point at infinity. These eigenvalues are continuous in $\a$ and $\b.$
\end{theorem}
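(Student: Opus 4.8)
The plan is to treat $\Op(\a,\b)=\cA+\cB(\a,\b)$ as a bounded perturbation of the Neumann Laplacian and to extract every quantitative statement from the simple structure of $\cB(\a,\b)$. First I would record that $\cR$ and $\cE$ have norms at most one and that $\cT(\a)$ is an isometry on $L^2(\mathds{R})$, so that
\[
\|\cB(\a,\b)\|_{L^2(0,1)\to L^2(0,1)}\le \|V\|_{L_\infty(0,1)}+\|Q\|_{L_\infty(0,1)}=:M
\]
uniformly in $\a,\b\in[0,1]$. Since $\cA$ is self-adjoint and nonnegative with form $\mathfrak{a}[u,v]=(u',v')_{L^2(0,1)}$ on $W_2^1(0,1)$, and $(\cB(\a,\b)u,v)_{L^2(0,1)}$ is bounded by $M\|u\|_{L^2(0,1)}\|v\|_{L^2(0,1)}$ (hence relatively form-bounded with bound zero), the form $\mathfrak{h}(\a,\b)$ in \eqref{2.1} is closed and sectorial. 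The first representation theorem then identifies the associated $m$-sectorial operator with $\cA+\cB(\a,\b)$ on $\Dom(\cA)$, which settles the first assertion.

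For the resolvent set and the first line of \eqref{2.3}, I would fix $\L:=-2M$ and factor $\Op(\a,\b)-\l=(\cA-\l)\big(I+(\cA-\l)^{-1}\cB(\a,\b)\big)$ for $\RE\l\le\L$. Because $\spec(\cA)\subset[0,\infty)$, one has $\dist(\l,[0,\infty))\ge 2M$ and hence $\|(\cA-\l)^{-1}\cB(\a,\b)\|\le M/(2M)=1/2$; the Neumann series then gives invertibility of $\Op(\a,\b)-\l$ together with a uniform bound $\|(\Op(\a,\b)-\l)^{-1}\|_{L^2(0,1)\to L^2(0,1)}\le 2\|(\cA-\l)^{-1}\|$. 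To upgrade this to the $W_2^2$ norm, I would read off from $-y''=f+\l y-\cB(\a,\b)y$ (with $y=(\Op(\a,\b)-\l)^{-1}f$ satisfying the Neumann conditions) that $\|y''\|_{L^2(0,1)}\le \|f\|_{L^2(0,1)}+(|\l|+M)\|y\|_{L^2(0,1)}$, and combine this with the uniform $L^2\to L^2$ bound and elliptic regularity for the Neumann problem.

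The hard part will be the second line of \eqref{2.3}, the $\tfrac12$-Hölder continuity. I would start from the resolvent identity
\[
(\Op(\a_1,\b_1)-\l)^{-1}-(\Op(\a_2,\b_2)-\l)^{-1}=(\Op(\a_1,\b_1)-\l)^{-1}\big(\cB(\a_2,\b_2)-\cB(\a_1,\b_1)\big)(\Op(\a_2,\b_2)-\l)^{-1},
\]
so that it suffices to bound $\cB(\a_2,\b_2)-\cB(\a_1,\b_1)$ on the range of the right resolvent, i.e.\ on functions $w\in W_2^2(0,1)$. The crucial and genuinely nonstandard estimate is for the translation differences: writing $h=\a_1-\a_2$ and using that $\cT(\a_2)$ is an isometry, $\|\cR(\cT(\a_1)-\cT(\a_2))\cE w\|_{L^2(0,1)}\le\|(\cT(h)-I)\cE w\|_{L^2(\mathds{R})}$. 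Here the zero-extension $\cE w$ is only piecewise smooth: it has jumps of sizes $w(0)$ and $w(1)$ at the endpoints, which under Neumann conditions need not vanish. Splitting $(\cE w)(\cdot+h)-\cE w$ into a contribution away from the jumps (controlled by $\|w'\|_{L^2(0,1)}|h|$) and a contribution in two windows of width $|h|$ around the jumps (controlled by $\|w\|_{C[0,1]}|h|^{1/2}$, since there the integrand is of order one on a set of measure $O(|h|)$) yields
\[
\|(\cT(h)-I)\cE w\|_{L^2(\mathds{R})}\le C\big(\|w\|_{W_2^1(0,1)}|h|+\|w\|_{C[0,1]}|h|^{\frac12}\big)\le C\|w\|_{W_2^2(0,1)}|h|^{\frac12}
\]
for $|h|\le 1$, using $W_2^2(0,1)\hookrightarrow C[0,1]$. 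It is exactly this jump that forces the exponent $\tfrac12$ rather than $1$, and this is the same high-frequency mechanism that later produces the nonstandard asymptotic terms. An identical argument handles the $Q$-term with $\b$, and inserting these bounds together with the uniform resolvent estimates into the resolvent identity gives the claimed $\tfrac12$-Hölder bound.

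Finally, since $(\Op(\a,\b)-\l)^{-1}$ maps $L^2(0,1)$ boundedly into $W_2^2(0,1)$ and the embedding $W_2^2(0,1)\hookrightarrow L^2(0,1)$ is compact by the Rellich theorem, the resolvent is compact; hence the spectrum is discrete, consisting of eigenvalues of finite multiplicity with the only accumulation point at infinity. The norm continuity (indeed $\tfrac12$-Hölder continuity) of the resolvent in $(\a,\b)$ just established, combined with the representation of the spectral projections as contour integrals of the resolvent over fixed circles separating the eigenvalues, shows that each eigenvalue depends continuously on $\a$ and $\b$.
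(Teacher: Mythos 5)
Your proposal is correct and takes essentially the same route as the paper: the form-plus-representation-theorem definition, the resolvent identity in $(\a,\b)$, the crucial $|h|^{1/2}$ estimate coming from the jumps of the zero extension $\cE w$ at the endpoints (the paper gets it from an explicit piecewise formula for $\cR(\cT(\a_1)-\cT(\a_2))\cE w$, you from splitting into jump windows — the same mechanism), compactness via the Sobolev embedding, and eigenvalue continuity from norm-resolvent continuity. The only cosmetic difference is that you locate the resolvent half-plane and the uniform bound by a direct Neumann-series perturbation argument with $\Lambda=-2M$, whereas the paper cites its earlier work for the uniform sectoriality of the form and uses the coercivity estimate; both are routine and equivalent here.
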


Let $\lambda_n=\l_n(\a,\b)$, $n\in\mathds{N}$, be the eigenvalues of the operator $\mathcal{H}(\alpha, \beta)$. We enumerate
these eigenvalues in the ascending order of their absolute values.

Now we formulate the main result of our maniscript. In the space $C[0, 1]$ we introduce the linear operators
\begin{equation}\label{OperM}
\begin{aligned}
&\big(\mathcal{M}(z,\alpha, \beta)u\big)(x, z):= \int\limits_0^x(\mathcal{B}(\alpha, \beta)u)(t, z)\sin z(x-t)\,dt, \\
&\big(\cL(z,\alpha, \beta)u\big)(x, z):=
\int\limits_0^{x}(\mathcal{B}(\alpha, \beta)u)(t, z)\cos z(x-t)\,dt,
\end{aligned}
\end{equation}
and the functions
\begin{equation}\label{fj}
f_j(z,\alpha, \beta):=\Big(\cL(z,\alpha, \beta)(\mathcal{M}(z,\alpha, \beta))^j\Phi(\,\cdot\,,z)\Big)\big|_{x=1},
\quad z\in\mathds{C},\quad  j\geq 0, \qquad \Phi(x,z):=\cos zx.
\end{equation}
We denote the $m$th derivative of $f_j$ with respect to  $z$ by $\p_z^m f_j.$

Our second main result provides the representations for the eigenvalues $\l_n$ in terms of converging series  in inverse powers of $n$.

\begin{theorem}\label{th1}
Let $V$ and $Q$ be complex--valued functions belonging the space $L_\infty (0, 1)$. There exists a fixed $n_0>0$ independent of $\a,\,\b\in[0,1]$ such that for $n\geqslant n_0$ the eigenvalues $\lambda_n$ of the operator $\mathcal{H}(\alpha, \beta)$ are simple and can be represented as
\begin{equation}\label{prom11z}
\lambda_n(\a,\b)=\Bigg(\pi n+\frac{1}{\pi n}\sum\limits_{i=0}^\infty\frac{\rho_i(n,\alpha, \beta)}{(\pi n)^i}\Bigg)^2.
\end{equation}
The series in the above representation converges uniformly in $\alpha,\, \beta\in[0,1]$ and $n\geqslant n_0.$ The coefficients
$\rho_i(n,\alpha, \beta)$  are given by the formulas
\begin{equation}\label{polDi}
\begin{aligned}
\rho_0(n,\alpha, \beta) &= (-1)^n\omega_0(n,\alpha, \beta), \quad
\rho_1(n,\alpha, \beta)=(-1)^n\omega_1(n,\alpha, \beta), \\
\rho_i(n,\alpha, \beta) &= (-1)^n\omega_i(n,\alpha, \beta)-\sum\limits_{s=1}^{[i/2]} H_{s, i-2s}(n,\a,\b),
\qquad i\geqslant 2,
\end{aligned}
\end{equation}
where $[\cdot]$ is the integer part of a number,
\begin{align}\label{Gp}
&\omega_i(n,\alpha, \beta):=\sum\limits_{p=0}^i\sum\limits_{t=0}^{i-p}E_{t, i+1-t-p}(n,\a,\b)
\sum\limits_{\substack{\zeta\in\mathds{Z}_+^t \\ |\zeta|=p}}
\rho_{\zeta_1}(n,\a,\b)\cdot\dots\cdot\rho_{\zeta_t}(n,\a,\b),
\qquad i\geqslant 0,
\\
&\label{Etl}
E_{t, l}(n,\a,\b):=\sum\limits_{j=\max\{0, l-t-1\}}^{l-1}\frac{(-1)^{l-j-1}(l-1)!
 \p_z^{(t-l+j+1)}f_j(\pi n,\a,\b)}{(l-j-1)!j!(t-l+j+1)!}, \quad t\geqslant 0, \quad l\geqslant 1,
\\
&\label{Hsm}
H_{s, m}(n,\a,\b):=\frac{(-1)^s}{(2s+1)!}\sum\limits_{\substack{\nu\in\mathds{Z}_+^{2s+1} \\ |\nu|=m}}
\rho_{\nu_1}(n,\a,\b)\cdot\ldots\cdot\rho_{\nu_{2s+1}}(n,\a,\b),
\quad s\geqslant 1, \quad m\geqslant 0,
\end{align}
and we adopt
\begin{equation}
\rho_{\zeta_1}(n,\a,\b)\cdot\ldots\cdot\rho_{\zeta_t}(n,\a,\b):=
\left\{
\begin{aligned}
& 1 \quad\text{for}\quad t=0, \quad p=0,
\\
&0 \quad \text{for}\quad t=0,\quad p>0.
\end{aligned}
\right. \label{2.2}
\end{equation}
The estimates
\begin{equation}\label{4.3}
|\rho_i(n,\a,\b)|\leqslant c_{1}c_{2}^i
\end{equation}
hold, where $c_{1},$ $c_{2}$ are some constants independent of $i,$ $n,$ $\a,$ $\b.$ For each $N\geqslant 1$ the remainder of the series in (\ref{prom11z}) satisfies the bound
\begin{equation}\label{4.19}
\bigg|\sum\limits_{i=N}^{\infty} \frac{\rho_i(n,\alpha, \beta)}{(\pi n)^i}\bigg| \leqslant \frac{2c_{1}c_{2}^N}{\pi^N n^N}\quad\text{for}\quad n\geqslant n_0.
\end{equation}
\end{theorem}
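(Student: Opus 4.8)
The plan is to reduce the eigenvalue problem to a single scalar characteristic equation, localize its roots in small disks around $\pi n$, and then produce the stated series by solving that equation order by order in $1/(\pi n)$, the convergence being forced by Cauchy estimates on the $f_j$.

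First I would pass to $\lambda=z^2$ and recast the equation $\mathcal{H}(\alpha,\beta)y=z^2y$ as a Volterra integral equation. Writing it as $y''+z^2y=\mathcal{B}(\alpha,\beta)y$, imposing the condition $y'(0)=0$ and normalizing $y(0)=1$, variation of parameters gives $y=\Phi(\cdot,z)+z^{-1}\mathcal{M}(z,\alpha,\beta)y$ with $\Phi(x,z)=\cos zx$ and $\mathcal{M}$ as in (\ref{OperM}). Since $\mathcal{M}$ is a Volterra operator on $C[0,1]$, the Neumann series $y=\sum_{j\geq0}z^{-j}(\mathcal{M}(z,\alpha,\beta))^{j}\Phi(\cdot,z)$ converges, and the remaining condition $y'(1)=0$, expressed through $\cL$, yields the characteristic equation
\[
z\sin z=\sum_{j=0}^{\infty}\frac{f_j(z,\alpha,\beta)}{z^{j}},\qquad\text{equivalently}\qquad \sin z=\sum_{j=0}^{\infty}\frac{f_j(z,\alpha,\beta)}{z^{j+1}},
\]
with $f_j$ exactly as in (\ref{fj}). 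A preliminary but essential step is to record, from the Volterra structure on a fixed disk $|z-\pi n|\leq R$ together with Cauchy's integral formula, the bounds $|\partial_z^m f_j(\pi n,\alpha,\beta)|\leq C\,m!\,R^{-m}\,b^{j}/j!$ uniformly in $\alpha,\beta\in[0,1]$, where $b$ controls $\|\mathcal{B}(\alpha,\beta)\|$ (and absorbs the factor $e^{R}$ coming from $|\sin z(x-t)|$ on the disk). These $1/j!$ and $m!$ factors are what ultimately drive every convergence estimate.

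Second, I would localize the roots and obtain simplicity. On a circle $|z-\pi n|=r$ with fixed $r<\pi/2$, the function $z\sin z$ has exactly one zero, simple, at $z=\pi n$, and satisfies $|z\sin z|\geq(\pi n-r)\min_{|\xi|=r}|\sin\xi|$, which is of order $n$; meanwhile the perturbation $\sum_{j}f_j/z^{j}$ is $O(1)$ on the circle by the bounds above. Hence for $n\geq n_0$ the two differ in modulus on the circle, and Rouch\'e's theorem gives exactly one root $z_n$ inside, simple; correspondingly $\lambda_n=z_n^2$ is a simple eigenvalue, consistent with the enumeration of Theorem~\ref{th0}.

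Third, I would solve for $z_n$ by the series. Put $\xi=z-\pi n$ and use $\sin z=(-1)^n\sin\xi$ to write the equation as $\sin\xi=(-1)^n\sum_{j}f_j(\pi n+\xi)/(\pi n+\xi)^{j+1}$. Substituting the ansatz $\xi=\sum_{i\geq0}\rho_i(n,\alpha,\beta)(\pi n)^{-i-1}$, the left side expands via $\sin\xi=\sum_{s\geq0}\frac{(-1)^s}{(2s+1)!}\xi^{2s+1}$, whose multinomial expansion of $\xi^{2s+1}$ in the $\rho$'s produces precisely the quantities $H_{s,m}$ of (\ref{Hsm}); the coefficient of $(\pi n)^{-i-1}$ on the left is therefore $\rho_i+\sum_{s=1}^{[i/2]}H_{s,i-2s}$. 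On the right side, Taylor expansion of $f_j$ about $\pi n$, the binomial series for $(\pi n+\xi)^{-j-1}$, and the multinomial expansion of $\xi^{t}$ combine: collecting the terms with $\xi^{t}(\pi n)^{-l}$ reproduces exactly the coefficients $E_{t,l}$ of (\ref{Etl}) (with $r=l-j-1$, $a=t-l+j+1$, whence the stated range of $j$ and the binomial factor $\tfrac{(-1)^{l-j-1}(l-1)!}{(l-j-1)!\,j!}$), and summation over compositions gives the aggregate $\omega_i$ of (\ref{Gp}) as the coefficient of $(\pi n)^{-i-1}$. Matching the two sides order by order yields $\rho_i+\sum_{s=1}^{[i/2]}H_{s,i-2s}=(-1)^n\omega_i$, which is exactly the recursion (\ref{polDi}); the leading balance $\pi n\,(-1)^n\xi\approx f_0(\pi n)$ gives $\rho_0=(-1)^n f_0(\pi n)=(-1)^n\omega_0$.

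Finally — and this is the main obstacle — I would prove the uniform estimate (\ref{4.3}) by strong induction on $i$. Assuming $|\rho_k|\leq c_1 c_2^{k}$ for $k<i$, one inserts the derivative bounds from the first step into (\ref{Etl})–(\ref{Gp}) and into the $H_{s,m}$-terms, and must show that the accumulated combinatorial weights (the factors $\tfrac{1}{(l-j-1)!\,j!\,(t-l+j+1)!}$, the $\tfrac{1}{(2s+1)!}$, and the number of admissible compositions $\zeta,\nu$) are dominated by a single geometric factor. The delicate point is that the $j!$ in the denominators of the $f_j$-bounds and the Taylor factorials must absorb the multinomial multiplicities so that the full summation over $(p,t,j,s,\zeta,\nu)$ stays bounded by $c_2^{i}$; choosing $R$, then $c_2$, and finally $n_0$ with $\pi n_0>c_2$ closes the induction. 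Once (\ref{4.3}) holds, absolute and uniform convergence of $\sum_i\rho_i(\pi n)^{-i}$ for $n\geq n_0$ and all $\alpha,\beta$ follows by comparison with a geometric series, the tail bound (\ref{4.19}) is immediate, and since both sides of the characteristic equation then converge, the formal matching becomes a genuine identity; by the uniqueness from Rouch\'e the resulting root is $z_n$, giving the representation (\ref{prom11z}).
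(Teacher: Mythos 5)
Your proposal follows the same skeleton as the paper (reduction to the integral equation with $\mathcal{M}$, the characteristic equation $z\sin z=\sum_j f_j(z,\a,\b)z^{-j}$, Rouch\'e localization near $\pi n$, and the order-by-order matching that produces \eqref{polDi}), but the input you feed into your final induction is wrong. You claim the bounds $|\p_z^m f_j(\pi n,\a,\b)|\le C\,m!\,R^{-m}\,b^j/j!$ ``from the Volterra structure.'' The operator $\mathcal{M}(z,\a,\b)$ is \emph{not} a Volterra operator: $\cB(\a,\b)$ contains the forward translation $u\mapsto u(\,\cdot\,+\a)$, so $(\mathcal{M}(z,\a,\b)u)(x)$ depends on the values of $u$ on $[0,\min(x+\a,1)]$ rather than on $[0,x]$, and the simplex-volume mechanism that yields $\|\mathcal{M}^j\|\le C^j/j!$ for genuine Volterra operators is destroyed (after roughly $1/\a$ iterations the dependence covers all of $[0,1]$). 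Uniformly in $\a,\b\in[0,1]$ only geometric bounds are available; this is exactly the paper's estimate \eqref{4.14}, $|f_j|\le c_5c_7^j$, and it is also why the Neumann series solving the integral equation is only shown to converge under the largeness condition \eqref{4.6} on $\RE z$ --- for a true Volterra operator it would converge for every $z\ne 0$, as your first step implicitly assumes.

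This is fatal for the step you yourself call the main obstacle, because your induction for \eqref{4.3} is designed to close precisely by letting ``the $j!$ in the denominators of the $f_j$-bounds absorb the multinomial multiplicities.'' With only geometric data, $|\p_z^mf_j(\pi n)|\le Cm!R^{-m}c_7^j$, one gets $|E_{t,l}|\le Ca^lb^t$, and inserting the hypothesis $|\rho_k|\le c_1c_2^k$ into \eqref{Gp} gives a sum over compositions whose cardinality $\binom{p+t-1}{t-1}$, together with the products $c_1^t$, produces at step $i$ a worst-case bound of order $(2c_2)^i$; no choice of $R$, $c_1$, $c_2$, $n_0$ makes this $\le c_1c_2^i$, so the induction with a purely geometric hypothesis does not close. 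The paper replaces this step by a different mechanism (Lemma~\ref{lm4.3}): it applies the holomorphic implicit function theorem to $\eta-\mu F(\eta,\mu)=0$ (with $\eta=\sin\xi$, $\mu=(\pi n)^{-1}$), bounds the Taylor coefficients of $F$ geometrically by Cauchy estimates, and then runs Cauchy's method of majorants: $F$ is majorized by $c_8/\big((1-c_9\eta)(1-c_9\mu)\big)$, the majorant equation is an explicitly solvable quadratic, and the induction is carried out against the majorant's positive Taylor coefficients $\tau_i$ (one proves $|B_i|\le\tau_i$, not $|B_i|\le c_{11}c_{12}^i$ directly); since the $\tau_i$ satisfy the nonlinear recursion with equality, no slack is lost, and the geometric bounds, the uniform radius of convergence, and finally \eqref{4.3} (after composing with $\arcsin$) all follow. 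Some device of this kind is indispensable in your scheme. Two smaller gaps: (i) your Rouch\'e argument on disks $|z-\pi n|\le r$ yields one root per disk, but to identify these roots with \emph{all} eigenvalues $\l_n$, $n\ge n_0$, in the ascending-modulus enumeration you also need the a priori localization of every eigenvalue in a horizontal strip (the paper's Lemma~\ref{lm4.1}) and the absence of zeros of the characteristic function in the strip outside the disks, which the paper obtains by partitioning the whole strip into rectangles; (ii) simplicity of $\l_n$ as an eigenvalue of $\Op(\a,\b)$ uses the uniqueness of solutions of the Cauchy problem, not only the simplicity of the zero of the characteristic function.
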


To demonstrate how the recurrent formulas \eqref{polDi} work, we have calculated the first three coefficients by these formulas:
\begin{equation}\label{coeffD}
\begin{aligned}
\rho_0(n,\alpha, \beta)=&(-1)^nf_0(\pi n,\a,\b),
\\
\rho_1(n,\alpha, \beta)=&f_0(\pi n,\a,\b)\p_z f_0(\pi n,\a,\b)
+(-1)^n f_1(\pi n,\a,\b), \\
\rho_2(n,\a,\b)=& \frac{(-1)^nf_0^3(\pi n,\a, \b)}{6}
+(-1)^nf_0(\pi n,\a,\b)
(\p_z f_0(\pi n,\a,\b))^2
\\
&+\p_z f_0(\pi n,\a,\b)f_1(\pi n,\a,\b)
+\frac{(-1)^n \p_z^2 f_0(\pi n,\alpha,\beta)f_0^2(\pi n,\a, \b)}{2}
\\
&-f_0^2(\pi n,\a, \b)+f_0(\pi n,\a,\b)
 \p_z f_1(\pi n,\a,\b)+(-1)^nf_2(\pi n,\a,\b).
\end{aligned}
\end{equation}

We can truncate the series in (\ref{prom11z}) and estimate the remainder by means of (\ref{4.19}). This gives the spectral asymptotics for the eigenvalues $\l_n(\a,\b).$ The examples of such asymptotics are provided in our next main result; in order to formulate, we need additional notation:
\begin{equation}\label{Gnj}
\begin{aligned}
G_{n, 1}(\alpha, \beta) &= \cos\pi n\alpha\int\limits_0^{1-\alpha}V(t)\,dt+\cos\pi n\beta\int\limits_\beta^{1}Q(t)\,dt, \\
G_{n, 2}(\alpha, \beta) &= (\alpha+1)\sin\pi n\alpha\int\limits_0^{1-\alpha}V(t)\,dt+(\beta-1)\sin\pi n\beta\int\limits_\beta^{1}Q(t)\,dt, \\
G_{n, 3}(\alpha, \beta) &= (\alpha+1)^2\cos\pi n\alpha\int\limits_0^{1-\alpha}V(t)\,dt+(\beta-1)^2\cos\pi n\beta\int\limits_\beta^{1}Q(t)\,dt,\\
G_{n, 4}(\alpha, \beta) &= -\big(V(1-\alpha)+V(0)\big)\sin\pi n\alpha-\big(Q(1)+Q(\beta)\big)\sin\pi n\beta, \\
G_{n, 5}(\alpha, \beta) &= (\alpha-1)\big(V(1-\alpha)+V(0)\big)\cos\pi n\alpha+(\beta-1)\big(Q(1)+Q(\beta)\big)\cos\pi n\beta, \\
G_{n, 6}(\alpha, \beta) &= V(1-\alpha)\bigg(-\sin\pi n\alpha\int\limits_0^{1-\alpha}V(t)\,dt
+\sin\pi n\beta\int\limits_\beta^{1}Q(t)\,dt\bigg)\sin\pi n\alpha.
\end{aligned}
\end{equation}

\begin{theorem}\label{th2}
For $n\geqslant n_0$ the eigenvalues $\lambda_n(\a,\b)$  possess
the asymptotics
\begin{equation}\label{mainasymptinfty}
\lambda_n(\a,\b)= \pi^2n^2+2\int\limits_0^{1-\alpha}V(t)\cos \pi nt\cos \pi n(t+\alpha)\,dt
+2\int\limits_\beta^1Q(t)\cos \pi nt\cos \pi n(t-\beta)\,dt+O(n^{-1}),
\end{equation}
as $n\to +\infty$. If $V$, $Q\in W^2_\infty (0, 1)$, then
\begin{equation}\label{lambdan1sm}
\lambda_n = \pi^2n^2+G_{n, 1}(\alpha, \beta)+\frac{\Lambda_{n, 1}(\alpha, \beta)}{2\pi n}
+\frac{\Lambda_{n, 2}(\alpha, \beta)}{4\pi^2n^2}+O(n^{-3}),
\end{equation}
as $n\to +\infty$, where
\begin{align*}
\Lambda_{n, 1}(\alpha, \beta) &= G_{n, 4}(\alpha, \beta)-G_{n, 1}(\alpha, \beta)G_{n, 2}(\alpha, \beta), \\
\Lambda_{n, 2}(\alpha, \beta) &= \frac{G_{n, 1}^3(\alpha, \beta)}{6}+G_{n, 1}(\alpha, \beta)G_{n, 2}^2(\alpha, \beta)
-\frac{G_{n, 3}(\alpha, \beta)G_{n, 1}^2(\alpha, \beta)}{2} \\
&\phantom{123}-2G_{n, 1}^2(\alpha, \beta)-G_{n, 1}(\alpha, \beta)G_{n, 5}(\alpha, \beta)
-G_{n, 2}(\alpha, \beta)G_{n, 4}(\alpha, \beta)+4G_{n, 6}(\alpha, \beta) \\
&\phantom{123}+\big(V'(1-\alpha)-V'(0)\big)\cos\pi n\alpha+\big(Q'(1)-Q'(\beta)\big)\cos\pi n\beta \\
&\phantom{123}-\int\limits_0^{1-\alpha}V''(t)\cos\pi n(2t+\alpha)\,dt-\int\limits_\beta^{1}Q''(t)\cos\pi n(2t-\beta)\,dt.
\end{align*}
The error terms in the above asymptotics are  uniform in $\alpha$, $\beta$, and $n$.
\end{theorem}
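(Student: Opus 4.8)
The plan is to derive both asymptotics directly from the convergent representation (\ref{prom11z}) of Theorem~\ref{th1}, using the uniform remainder bound (\ref{4.19}) and the uniform coefficient bound (\ref{4.3}) to keep every error term uniform in $\a,\b$. Writing $S_n(\a,\b):=\sum_{i=0}^\infty \rho_i(n,\a,\b)/(\pi n)^i$, the representation becomes
\[
\l_n=\Big(\pi n+\frac{S_n}{\pi n}\Big)^2=\pi^2n^2+2S_n+\frac{S_n^2}{\pi^2n^2}.
\]
By (\ref{4.3}) the series $S_n$ is uniformly bounded, and by (\ref{4.19}) one has $S_n=\sum_{i=0}^{N-1}\rho_i/(\pi n)^i+O(n^{-N})$ uniformly for each $N$. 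Substituting and expanding the square produces, for each truncation order, an explicit polynomial in $(\pi n)^{-1}$ with coefficients built from the $\rho_i$ and a uniform remainder; this is the common mechanism behind both (\ref{mainasymptinfty}) and (\ref{lambdan1sm}).

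For (\ref{mainasymptinfty}) I would take $N=1$. Since $S_n$ is bounded, the last term above is $O(n^{-2})$, so $\l_n=\pi^2n^2+2\rho_0+O(n^{-1})$. By (\ref{coeffD}), $\rho_0=(-1)^nf_0(\pi n,\a,\b)$, and unfolding the definitions (\ref{OperM})--(\ref{fj}) of $\cL$ and $f_0$ together with the action (\ref{B}) of $\cB$ on $\Phi(x,z)=\cos zx$ gives
\[
f_0(z,\a,\b)=\int_0^{1-\a}V(t)\cos z(t+\a)\cos z(1-t)\di t+\int_\b^1 Q(t)\cos z(t-\b)\cos z(1-t)\di t,
\]
because $\cB$ inserts the translated, truncated cosines $V\cos z(\cdot+\a)$ on $(0,1-\a)$ and $Q\cos z(\cdot-\b)$ on $(\b,1)$. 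Evaluating at $z=\pi n$ and using $\cos\pi n(1-t)=(-1)^n\cos\pi nt$ shows that $2\rho_0$ equals exactly the two integrals in (\ref{mainasymptinfty}). Only $V,Q\in L_\infty$ is used here, as no integration by parts is performed, and the uniformity of the error is inherited from Theorem~\ref{th1}.

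For (\ref{lambdan1sm}) I would take $N=3$, obtaining
\[
\l_n=\pi^2n^2+2\rho_0+\frac{2\rho_1}{\pi n}+\frac{2\rho_2+\rho_0^2}{\pi^2n^2}+O(n^{-3}),
\]
where the $O(n^{-3})$ collects the tail of $S_n$ and the subleading part of $S_n^2/(\pi^2n^2)$, all uniform by (\ref{4.3})--(\ref{4.19}). Inserting the explicit $\rho_0,\rho_1,\rho_2$ from (\ref{coeffD}), the proof reduces to evaluating, as $n\to+\infty$, the quantities $f_0,\p_zf_0,\p_z^2f_0,f_1,\p_zf_1,f_2$ at $z=\pi n$ to the orders that survive: $2\rho_0$ to $O(n^{-3})$, $2\rho_1$ to $O(n^{-2})$, and $2\rho_2+\rho_0^2$ to leading order. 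This is where the hypothesis $V,Q\in W_\infty^2$ is needed, since it permits two integrations by parts in the oscillatory integrals.

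The core step, and the main obstacle, is this explicit asymptotic evaluation. By (\ref{OperM})--(\ref{fj}) and the repeated action of $\cB$, each $f_j$ is a $(j+1)$--fold iterated integral of $V,Q$ against products of $\sin,\cos$ with arguments shifted by $\a,\b$; in particular $f_1$ is a double and $f_2$ a triple integral. I would split every integrand by product--to--sum identities into a non--oscillatory part, whose phase is constant in the innermost variable and which produces the factors $\int_0^{1-\a}V$, $\int_\b^1 Q$ and the trigonometric coefficients $\cos\pi n\a,\sin\pi n\a,\cos\pi n\b,\sin\pi n\b$ assembling the quantities $G_{n,k}$ in (\ref{Gnj}), and oscillatory parts with phase proportional to the integration variable. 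The latter are expanded by integrating by parts twice: the first integration produces boundary values $V(0),V(1-\a),Q(\b),Q(1)$ and their products with the surviving integral factors, which build $G_{n,4},G_{n,5},G_{n,6}$, and the second produces $V'(1-\a),V'(0),Q'(1),Q'(\b)$ together with residual integrals of $V'',Q''$, exactly the terms on the last two lines of $\L_{n,2}$. Collecting by powers of $(\pi n)^{-1}$, and using identities such as $\cos\pi n(1-t)=(-1)^n\cos\pi nt$ and $\sin\pi n(1-t)=-(-1)^n\sin\pi nt$ to eliminate the $(-1)^n$ factors, I would identify the $O(1)$ coefficient with $G_{n,1}$, the $(\pi n)^{-1}$ coefficient with $\L_{n,1}/2$, and the $(\pi n)^{-2}$ coefficient with $\L_{n,2}/4$. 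The difficulty is organizational rather than conceptual: the triple integral defining $f_2$ and the cross--products in (\ref{coeffD}) generate many terms, and the labor lies in carrying each integration--by--parts expansion to the correct order and checking that the contributions collapse into the compact combinations $\L_{n,1},\L_{n,2}$. Throughout, every remainder is bounded by $\|V\|_{W_\infty^2}$ and $\|Q\|_{W_\infty^2}$ independently of $\a,\b$, which yields the claimed uniformity of the error terms.
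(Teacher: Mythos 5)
Your proposal is correct and follows essentially the same route as the paper: truncate the convergent series of Theorem~\ref{th1} via (\ref{4.3})--(\ref{4.19}) to get $\l_n=\pi^2n^2+2\rho_0+O(n^{-1})$ and $\l_n=\pi^2n^2+2\rho_0+\frac{2\rho_1}{\pi n}+\frac{2\rho_2+\rho_0^2}{\pi^2n^2}+O(n^{-3})$, insert the explicit $\rho_0,\rho_1,\rho_2$ from (\ref{coeffD}), evaluate $f_0,\p_zf_0,\p_z^2f_0,f_1,\p_zf_1,f_2$ at $z=\pi n$ as iterated oscillatory integrals, and (for $V,Q\in W_\infty^2$) integrate by parts twice to assemble the $G_{n,k}$ into $\L_{n,1},\L_{n,2}$. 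Your formula for $f_0$ and the attribution of boundary terms to $G_{n,4},G_{n,5},G_{n,6}$ and of second derivatives to the $V'',Q''$ integrals match the paper's intermediate computations exactly, differing only in how much of the bookkeeping is written out.
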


The third main result describes the Bari basis property of the eigenfunctions and generalized eigenfunctions of the
operator $\mathcal{H}(\alpha, \beta)$ in the space  $L^2(0, 1)$. We recall that the Bari basis is one generated by
systems of projections, which are quadratically close to complete and minimal systems of orthogonal projections.
\begin{theorem}\label{th3}
The system of eigenfunctions and generalized eigenfunctions of the operator
$\mathcal{H}(\alpha, \beta)$ forms the Bari basis in the space $L^2(0, 1)$.
\end{theorem}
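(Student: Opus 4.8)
\textit{Proof plan.} The plan is to compare the spectral projections of $\Op(\a,\b)$ with the orthogonal eigenprojections of the unperturbed Neumann operator $\cA$ and to verify that the two systems are quadratically close, which is exactly the property recalled before the statement of Theorem \ref{th3}. Throughout, $\|\cdot\|$ denotes the operator norm in $L^2(0,1)$. Let $\mu_n=\pi^2n^2$, $n\in\mathds{N}\cup\{0\}$, be the eigenvalues of $\cA$ and let $P_n^0$ be the corresponding orthogonal projections onto the one--dimensional eigenspaces spanned by $\cos\pi nx$ (the constant for $n=0$); the family $\{P_n^0\}$ is a complete and minimal system of mutually orthogonal projections with $\sum_nP_n^0=I$. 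For large $n$ I would fix the circle $\G_n:=\{\l:|\l-\mu_n|=\pi^2n\}$, which separates $\mu_n$ from $\mu_{n\pm1}$ and, by Theorems \ref{th0} and \ref{th1}, encloses exactly one simple eigenvalue $\l_n(\a,\b)$ of $\Op(\a,\b)$, and introduce the Riesz projections
\begin{equation*}
P_n(\a,\b):=-\frac{1}{2\pi\iu}\oint_{\G_n}(\Op(\a,\b)-\l)^{-1}\di\l,\qquad
P_n^0=-\frac{1}{2\pi\iu}\oint_{\G_n}(\cA-\l)^{-1}\di\l.
\end{equation*}

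The central step is the estimate of $\|P_n(\a,\b)-P_n^0\|$. On $\G_n$ the nearest point of $\spec(\cA)$ is $\mu_n$, whence $\|(\cA-\l)^{-1}\|\le(\pi^2n)^{-1}$. Since the translation operators are contractions, $\cB(\a,\b)$ is bounded uniformly in $\a,\b$, with $\|\cB(\a,\b)\|\le\|V\|_{L_\infty}+\|Q\|_{L_\infty}$, so $\cB(\a,\b)(\cA-\l)^{-1}$ has norm $O(n^{-1})$ on $\G_n$ uniformly in $\a,\b$; hence for $n$ large the Neumann series converges and $(\Op(\a,\b)-\l)^{-1}$ is likewise $O(n^{-1})$ on $\G_n$, with constants independent of $\a,\b$. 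The resolvent identity yields
\begin{equation*}
P_n(\a,\b)-P_n^0=\frac{1}{2\pi\iu}\oint_{\G_n}(\cA-\l)^{-1}\cB(\a,\b)(\Op(\a,\b)-\l)^{-1}\di\l,
\end{equation*}
and since $\G_n$ has length $2\pi^3n$ while each resolvent factor is $O(n^{-1})$, I obtain $\|P_n(\a,\b)-P_n^0\|\le Cn^{-1}$ with $C$ independent of $\a,\b$. Both projections having rank one, their difference has rank at most two, so $\|P_n(\a,\b)-P_n^0\|_{\mathrm{HS}}\le\sqrt2\,Cn^{-1}$ and therefore
\begin{equation*}
\sum_n\|P_n(\a,\b)-P_n^0\|_{\mathrm{HS}}^2\le 2C^2\sum_n n^{-2}<\infty
\end{equation*}
uniformly in $\a,\b$, which is the required quadratic closeness.

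It then remains to treat the finitely many indices $n<n_0$ and to secure completeness and minimality. The projections $P_n(\a,\b)$ are mutually disjoint, $P_nP_m=\d_{nm}P_n$, since they correspond to disjoint spectral sets; this gives the minimality of the system of root subspaces, and $\|P_n-P_n^0\|<1$ for large $n$ forces $\rank P_n=\rank P_n^0=1$, in agreement with Theorem \ref{th1}. For $n<n_0$ I would enclose the corresponding, possibly multiple, eigenvalues in a single contour and replace the finitely many $P_n,P_n^0$ by one finite--rank Riesz projection and the matching orthogonal projection, which changes the quadratic sum by finitely many terms only. Completeness of the root system of $\Op(\a,\b)$ follows from standard completeness theorems for relatively compact perturbations of self--adjoint operators with discrete spectrum (Keldysh type), applicable here because $\cA^{-1}$ is trace class (its eigenvalues are $O(n^{-2})$) and $\cB(\a,\b)$ is bounded; it can alternatively be deduced from the quadratic closeness by a Fredholm argument. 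With completeness, minimality and quadratic closeness established, the characterization of the Bari basis recalled before Theorem \ref{th3} shows that the eigenfunctions and generalized eigenfunctions of $\Op(\a,\b)$ form a Bari basis in $L^2(0,1)$.

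The hard part is the quadratic--closeness bound: one must extract the sharp decay $\|P_n-P_n^0\|=O(n^{-1})$, rather than a mere $o(1)$, since only then are the squares summable and the basis is of Bari type rather than merely Riesz. This sharp rate rests on the linear growth $\mu_{n+1}-\mu_n\sim n$ of the spectral gaps, which produces the $O(n^{-1})$ decay of the resolvent on $\G_n$, together with the uniform boundedness of $\cB(\a,\b)$; keeping all constants independent of $\a$ and $\b$, and handling the low--index block where the eigenvalues need not be simple, are the accompanying technical points.
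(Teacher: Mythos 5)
Your proposal is correct in substance, but it takes a genuinely different route from the paper. The paper's entire proof is a one--line citation: it invokes \cite{MityaginSiegl} (Theorem~3.4, Remark~3.5) with $\alpha=0$ and $\gamma=2$, the point being that $\cB(\a,\b)$ is bounded on $L^2(0,1)$ uniformly in $\a,\b$ (form--subordination exponent $0$) while the Neumann Laplacian $\cA$ has eigenvalues $\pi^2n^2$ with linearly growing gaps, so the abstract form--subordination machinery immediately yields the Bari property. You instead reprove the relevant special case of that machinery by hand: the $O(n^{-1})$ resolvent bound on the circles $\G_n$ via a Neumann series, the resolvent identity giving $\|P_n(\a,\b)-P_n^0\|\le Cn^{-1}$ uniformly in $\a,\b$, the rank--two observation converting this into a Hilbert--Schmidt bound and hence square--summability, completeness of the root system by a Keldysh--type theorem, minimality from disjointness of the Riesz projections, and a single finite--rank block for the low indices. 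What each buys: the paper's citation is shorter and delegates exactly the points you had to treat separately (completeness, the low--index block, simplicity of large eigenvalues), while your argument is self--contained, makes the uniformity in $\a$ and $\b$ completely explicit, and isolates the mechanism that makes the basis Bari rather than merely Riesz, namely the sharp $O(n^{-1})$ decay coming from the linear growth of the spectral gaps. Two small slips that do not damage the argument: on $\G_n$ the spectral point of $\cA$ nearest to the leftmost points of the circle is $\mu_{n-1}$ (at distance $\pi^2(n-1)$), not $\mu_n$, so the resolvent bound should read $(\pi^2(n-1))^{-1}$ rather than $(\pi^2 n)^{-1}$; and $\cA$ itself is not invertible ($0$ is an eigenvalue with constant eigenfunction), so the Keldysh argument should be applied to the shifted resolvent $(\cA+I)^{-1}$, which is indeed trace class since its eigenvalues are $(1+\pi^2n^2)^{-1}$.
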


Now we briefly discuss  our problem and main results. The main feature of the operator $\Op(\a,\b)$ is its free term described by the operator $\cB(\a,\b)$, which involves two translation operators, see (\ref{B}). This makes the operator $\Op(\a,\b)$ non--local. In order to define properly the translation operators, we follow the scheme suggested in \cite{Skub_Book}, \cite{SkubUMN} and this explains the definition of operator $\cB(\a,\b)$ in terms of the continuation operator $\cE$ and the restriction operator $\cR.$ The operator $\cB(\a,\b)$ also involves two potentials, $V$ and $Q,$ which are complex--valued functions from $L_\infty(0,1).$

Our first preliminary result formulated in Theorem~\ref{th0} describes the basic properties of operator $\mathcal{H}(\alpha, \beta)$. Namely, this is an $m$--sectorial operator with compact resolvent and its spectrum consists of a countably many eigenvalues with the only accumulation point at infinity. The resolvent of this operator and its eigenvalues are continuous in $\a$ and $\b.$

Our main result is formulated in Theorem~\ref{th1}. It states that the sufficiently large eigenvalues can be represented by the formula (\ref{prom11z}) in terms of the convergent series. How large should the eigenvalue be? The answer is \textit{uniform  in $\a$ and $\b$}, namely, this representation holds for all eigenvalues with the index $n\geqslant n_0,$ where $n_0$ is some fixed number \textit{independent of $\a$ and $\b$.} This can be equivalently formulated as follows: there exists a fixed ball in the complex plane independent of $\a$ and $\b$ such that all eigenvalues located outside this ball can be represented by the formula (\ref{prom11z}).

The main ingredient of formula (\ref{prom11z}) is of course the series. The  terms of this series are negative powers of $n$ with the coefficients depending on $\a,$ $\b,$ and $n.$ The main property of these coefficients is 
the estimates (\ref{4.3}) and in fact, these estimates ensure the absolute and uniform in $\a,$ $\b$ and $n$ convergence of this series. This series resembles the Taylor series, in which the quotient $\frac{1}{\pi n}$ plays the role of the small increment of the independent variable, but the main difference is the dependence of the coefficients on $n.$  Once the coefficients $\rho_i$ are known, the series converges, and the formula (\ref{prom11z}) provides the \textit{explicit expression} for the eigenvalues. The coefficients $\rho_i$ can be indeed found explicitly, namely, there are explicit recurrent formulas (\ref{polDi}) for them, which express the coefficients via the potentials $V$ and $Q.$ The formulas are  quite cumbersome and it is not a trivial problem to find some direct, non--recurrent formulas in terms of $i,$ $\a,$ $\b,$ $V,$ and $Q.$ But nevertheless, we stress again, once these coefficients are found, we have \textit{explicit and exact formulas (\ref{prom11z}) for the eigenvalues} $\l_n(\a,\b).$

To the best of our knowledge, the representations similar to (\ref{prom11z}) are not known even for the classical Sturm~---~Liouville problems. Namely, while the spectral asymptotics for classical problems were studied very well, see, for instance, \cite{Marchenko}, \cite{LS}, the convergence of the asymptotic series is not known. Moreover, the classical  asymptotic series usually diverges and the reason is as follows. Provided the functions $V$ and $Q$ are smooth enough, one can use the approach from the stationary phase method and integrate by parts in the definitions of the coefficients $\rho_i$ and to obtain the asymptotics for these coefficients in the negative powers of $n.$ Substituting these asymptotics into the series in (\ref{prom11z}), we get the spectral asymptotics only in negative powers of $n$ with fixed coefficients. However, in such asymptotics various remainders come from the aforementioned integration by parts, and exactly such remainders spoil the convergence and give the divergent asymptotics series. In our approach we keep the coefficients $\rho_i$ as they are and do not replace them by their asymptotic expansions. This allows us to preserve the convergence of series in (\ref{prom11z}) and this is the main difference from the classical results we know.

The explicit formulas (\ref{prom11z}) for the eigenvalues can be useful for many applications and further studies, and one of very natural application is as follows. While calculating all coefficients $\rho_i$ is a quite non--trivial problem, by the recurrent formulas (\ref{polDi}) we can find any prescribed number of the coefficients. Then we additionally have the explicit estimate (\ref{4.19}) for the remainder of series and hence, the partial sum of this series provides an effective approximation for the eigenvalues $\l_n(\a,\b).$ In this way we can find the spectral asymptotics for the eigenvalues with the uniform in $\a$ and $\b$ estimates for the error term, or, shortly, \textit{uniform spectral asymptotics.} We demonstrate this possibility for $N=2$ and $N=4$ in Theorem~\ref{th2}, in which we provide the uniform spectral asymptotics for the eigenvalues $\l_n(\a,\b).$  It should be said that these asymptotics are similar in spirit to the spectral asymptotics for the  Dirichlet condition recently obtained in   \cite{Borisov_PolyakovDAN}, \cite{Borisov_PolyakovIZV}, \cite{Borisov_PolyakovUMJ}, but here they are more complicated due to the presence of two translations.

The asymptotics (\ref{mainasymptinfty}) is general  in the sense that it is valid for essentially bounded potentials $V$ and $Q.$ Trying to rewrite the formulas for $\rho_1$ and $\rho_2$ in (\ref{coeffD}), we get quite bulky formulas, and to simplify them, we suppose in addition that the potentials $V$ and $Q$  are smooth enough. This allows us to make the above discussed integration by parts in the coefficients $\rho_1$ and $\rho_2$ and to get the asymptotics (\ref{lambdan1sm}).  Let us discuss the latter asymptotics in more detail.

As it has been said above, the representations (\ref{prom11z}) hold for all eigenvalues of the operator $\Op(\a,\b)$ outside some fixed ball independent of $\a$ and $\b$ and this is why the same is true for the asymptotics (\ref{mainasymptinfty}), (\ref{lambdan1sm}). This means that both these asymptotics describe the behavior of \textit{entire ensemble of eigenvalues} except for finitely many of them uniformly in the parameters $\a$ and $\b.$

For $\alpha=\beta=0$ the asymptotics (\ref{mainasymptinfty}) becomes the classical one for the Schr\"odinger operator on the segment
(see, for example, \cite[Ch.~1, Sec.~5]{Marchenko}):
$$
\lambda_n(0,0)=\pi^2n^2+ \int\limits_0^1\big(V(t)+Q(t)\big)\,dt+O(n^{-2}).
$$
If $\alpha=\beta=1$, then $G_{n, 1}(\alpha, \beta)=\Lambda_{n, 1}(\alpha, \beta)=\Lambda_{n, 2}(\alpha, \beta)=0$ and we again get classical case $\lambda_n(1,1)=\pi^2n^2$.

For  $\alpha,\, \beta\in (0, 1)$ the leading terms of  the asymptotics (\ref{mainasymptinfty}),  (\ref{lambdan1sm}) depend on  $n$ via the sines and cosines of $\pi n \a$ and $\pi n \b$ and there is also an  additional  dependence on $\a$ and $\b,$ see (\ref{Gnj}). As $\a\to0$ and $\b\to0,$
then Theorems~5 and~6 from \cite{Borisov_Polyakov} state that the eigenvalues of $\l_n(\a,\b)$ converge to $\l_n(0,0).$ However, this statement holds only for each finite set of the eigenvalues and this convergence is not uniform in $n.$
  This is reflected already by the asymptotics (\ref{mainasymptinfty}), (\ref{lambdan1sm}) since the mentioned sines and cosines of $\pi n\a$ and $\pi n\b$ do not demonstrate the uniform in $n$ convergence as $\a\to0$ and $\b\to0.$ In particular, it follows from
the asymptotics (\ref{mainasymptinfty}) that
for sufficiently large $n$ the eigenvalues $\l_n(\a,\b)$
 are localized
along the curve in the complex plane
$$
\Gamma_{\alpha, \beta}:=\big\{z=t^2+v(\a)\cos t\alpha+q(\beta)\cos t\beta\big\}\subset\mathds{C}, \qquad
v(\alpha):=\int\limits_0^{1-\alpha}V(t)\,dt, \qquad q(\beta):=\int\limits_\beta^{1}Q(t)\,dt,
$$
in the vicinity of the points  at this curve associated with   $t=\pi n$. The curve $\G_{\a,\b}$ resembles a graph of a doubly periodic function; as $\a$ and $\b$ grow from $0$ to $1,$ the period of oscillations  descrease and we see more oscillations along the curve. Such high--frequency phenomenon is generated solely by the translations  in the operator $\Op(\a,\b)$. This phenomenon seems to be quite general and independent of the type of boundary conditions since the similar phenomenon was found for the Dirichlet conditions as well in \cite{Borisov_PolyakovDAN}, \cite{Borisov_PolyakovIZV}, \cite{Borisov_PolyakovUMJ}.

Our third result, Theorem~\ref{th2}, states the basicity of the eigenfunctions and generalized eigenfunctions of the operator $\Op(\a,\b),$ namely, these functions form  a Bari basis in $L^2(0, 1)$.  This is an important result since the known present results concern only the completeness and minimality of systems of eigenfunctions and
generalized eigenfunctions for differential--difference operators, and there are no even results on the  classical Riesz basicity.

\section{Basic properties of   operator $\mathcal{H}(\alpha, \beta)$}\label{sec3*}

In this section we prove Theorem~\ref{th0}. Using the results of \cite[Sect.~6, Thm.~4]{Borisov_Polyakov}, we obtain
the sesquilinear form $\mathfrak{h}(\alpha, \beta)$ is closed, sectorial, and its numerical range is contained
in the sector $\{z\in\mathds{C}:\, |\IM z|\leqslant c_{3}(\RE z-c_{4})\}$,
where $c_{3}$ and $c_{4}$ are some positive constants independent of $\alpha$ and $\beta$. By the first
representation Theorem \cite[Ch. V\!I, Sect.~2.1, Thm.~2.1]{Kato}, there exists
associated $m$--sectorial operator $\widetilde{\mathcal{H}}(\alpha, \beta)$. We are going to show that $\widetilde{\mathcal{H}}(\alpha, \beta)=\mathcal{H}(\alpha, \beta)$.
Let $u\in\mathfrak{D}(\mathcal{H}(\alpha, \beta))$, $v\in\mathfrak{D}(\mathfrak{h}(\alpha, \beta))$.
Integrating by parts, we obtain
\[
(\mathcal{H}(\alpha, \beta)u, v)_{L^2(0, 1)}=\mathfrak{h}(\alpha, \beta)[u, v].
\]
Therefore, $\mathfrak{D}(\mathcal{H}(\alpha, \beta))\subseteq\mathfrak{D}(\widetilde{\mathcal{H}}(\alpha, \beta))$
and the operator $\widetilde{\mathcal{H}}(\alpha, \beta)$ is a continuation of operator
$\mathcal{H}(\alpha, \beta)$. Thus, to prove the coincidence of the operators
$\mathcal{H}(\alpha, \beta)$ and $\widetilde{\mathcal{H}}(\alpha, \beta)$ it is
to verify that $\mathfrak{D}(\mathcal{H}(\alpha, \beta))
\supseteq\mathfrak{D}(\widetilde{\mathcal{H}}(\alpha, \beta))$.
By the first representation Theorem~\cite[Ch.~V\!I, Sect.~2.1, Thm.~2.1]{Kato},
the domain $\widetilde{\mathcal{H}}(\alpha, \beta)$ consist of functions
$u\in\mathfrak{D}(\mathfrak{h}(\alpha, \beta))$ such that there exists a function
$h\in L^2(0, 1)$ satisfying the condition
\begin{equation*}
\mathfrak{h}(\alpha, \beta)[u, v]=(h, v)_{L^2(0, 1)}\quad\text{for each}\quad
v\in\mathfrak{D}(\mathfrak{h}(\alpha, \beta)).
\end{equation*}
Let $u$ be one of these functions. It follows from definition of the
sesquilinear form \eqref{2.1} that the identity can rewrite as
\begin{equation}\label{3.2}
(u', v')_{L^2(0, 1)}=(\widetilde{h}, v)_{L^2(0, 1)}\quad\text{for each}
\quad v\in\mathfrak{D}(\mathfrak{h}(\alpha, \beta)),
\end{equation}
where
\[
\widetilde{h}:=h-V\mathcal{R}\mathcal{T}(\alpha)\cE u
-Q\mathcal{R}\mathcal{T}(-\beta)\cE u\in L^2(0, 1).
\]
The formula \eqref{3.2} implies that the function $u$ is generalized solution to the problem
\[
-u''=\widetilde{h} \quad \text{in} \quad (0, 1), \qquad u'(0)=u'(1)=0.
\]
Using the classical theorems on increasing the smoothness of solutions to elliptic
boundary value problems, we deduce $\mathfrak{D}(\mathcal{H}(\alpha, \beta))=
\{u\in W_2^2(0, 1): u'(0)=u'(1)=0\}$. Therefore,
$\mathfrak{D}(\widetilde{\mathcal{H}}(\alpha, \beta))\subseteq
\mathfrak{D}(\mathcal{H}(\alpha, \beta))$. This gives
$\mathfrak{D}(\widetilde{\mathcal{H}}(\alpha, \beta))=\mathfrak{D}(\mathcal{H}(\alpha, \beta))$ and
$\widetilde{\mathcal{H}}(\alpha, \beta)=\mathcal{H}(\alpha, \beta)$.
It follows from the properties of the
sesquilinear form $\mathfrak{h}(\alpha, \beta)$ that the half--plane $\RE \lambda\leqslant c_{4}-1$
belongs to the resolvent set of the operators $\mathcal{H}(\alpha, \beta)$ for each
$\alpha$, $\beta\in [0, 1]$ and the estimate
\begin{equation}\label{3.3}
\RE \hf(\a,\b)[u,u]+\RE\l \|u\|_{L_2(0,1)}^2\geqslant \|u\|_{W_2^1(0,1)}^2
\end{equation}
holds. The resolvent of the operator
$\mathcal{H}(\alpha, \beta)$ is a bounded operator acting in $L^2(0, 1)$ and by the
inverse mapping theorem this operator is bounded from $L^2(0, 1)$ into $W_2^1(0, 1)$. It follows from
the compactness of the embedding $W_2^1(0, 1)\subset L^2(0, 1)$ that the
resolvent of the operator $\mathcal{H}(\alpha, \beta)$ is compact operator.
Using \cite[Ch.~3, \S~6.8, Theorem~6.29]{Kato}, we obtain that
the spectrum of the operator $\mathcal{H}(\alpha, \beta)$ consists of countably
many discrete eigenvalues with the only accumulation point at infinity.

The estimate (\ref{3.3}) implies that  for $\RE\l\leqslant c_4-1$
\begin{equation*}
\|(\Op(\a,\b)-\l)^{-1}\|_{L_2(0,1)\to W_2^1(0,1)}\leqslant 1.
\end{equation*}
In view of the equation for the resolvent we then obtain the first estimate in (\ref{2.3}).
To prove the second estimate in (\ref{2.3}), we observe the easily verified identity
\begin{equation}\label{3.4}
\begin{aligned}
(\Op(\a_1,\b_1)-\l)^{-1}-(\Op(\a_2,\b_2)-\l)^{-1}=&(\Op(\a_2,\b_2)-\l)^{-1}
\big(V\cR(\cT(\a_1)-\cT(\a_2))\cE
\\
&+Q\cR(\cT(-\b_1)-\cT(-\b_2))\cE\big)
(\Op(\a_1,\b_1)-\l)^{-1}.
\end{aligned}
\end{equation}
It is straightforward to verify  that for all $\a,\,\b\in[0,1],$ $\a\leqslant \b,$ $u\in W_2^1(0,1)$ we have
\begin{equation*}
\big(\cR(\cT(\a)-\cT(\b))\cE u\big)(x)=\left\{
\begin{aligned}
& 0, && x\leqslant \a,
\\
u(x&-\a), && \a<x\leqslant \b,
\\
\int\limits_{x-\b}^{x-\a}& u'(t)\,dt,\quad && x\geqslant \b.
\end{aligned}
\right.
\end{equation*}
Using now the embedding $W_2^1(0,1)\subset C[0,1],$ we find
\begin{align*}
\|\big(\cR(\cT(\a)-\cT(\b))\cE u\big)\|_{L_2(0,1)}^2 \leqslant &C |\b-\a|\|u\|_{W_2^1(0,1)}^2 + C |\b-\a|\int\limits_{\b}^{1}\,dx\int\limits_{x-\b}^{x-a}  |u'(t)|^2 dt
\\
\leqslant & C|\b-\a| \|u\|_{W_2^1(0,1)}^2,
\end{align*}
where $C$ are some constants independent of $\a,$ $\b$ and $u.$ The obtained estimate,  identity (\ref{3.4}) and the first inequality in (\ref{2.3}) imply the second inequality in (\ref{2.3}).   This inequality means that the resolvent of operator $\Op(\a,\b)$ is continuous in $(\a,\b)\in[0,1]^2.$ Hence, the same is true for the eigenvalues of this operator.  The proof is complete.

\section{Representations for eigenvalues}\label{sec2}

In this section we prove Theorem~\ref{th1}. The eigenvalues of the operator $\mathcal{H}(\alpha, \beta)$ solve
the boundary value problem
\begin{align}
-y''(x) &+ V(x)y(x+\alpha)+Q(x)y(x-\beta) =\lambda y(x), \quad x\in(0,1),\label{bp1.1}
\\
y'(0) &= y'(1)=0, \label{bp2.1}
\end{align}
where $\lambda$ is a spectral parameter and the function $y$ is supposed to be continued by zero outside the segment $[0, 1]$
and after that the translation is restricted to the segment $[0, 1]$. Throughout this section we represent the eigenvalues
of the above problem as $\lambda=z^2$, $z\in\mathds{C}$, $\RE z\geqslant 0$.

\begin{lemma}\label{lm4.1}
Let $\l$  be an eigenvalue of the operator $\mathcal{H}(\alpha, \beta)$. Then the corresponding value $z\in\mathds{C}$
satisfies $z\in\Pi,$ where
\begin{equation}\label{4.9}
\Pi:=\{z\in\mathds{C}:\,\RE z^2\geqslant -c_{5},\ |\IM z^2|\leqslant c_{5}\},\qquad \Pi\subset
 \Big\{z\in\mathds{C}:\,\RE z\geqslant 0,\ |\IM z|\leqslant \sqrt{2}c_{5}\Big\}.
\end{equation}
\end{lemma}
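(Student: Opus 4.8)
The plan is to read off both inequalities defining $\Pi$ directly from the quadratic form evaluated on a normalized eigenfunction, and then to pass from the resulting bounds on $\l=z^2$ to a bound on $\IM z$ by an elementary two--variable estimate. Let $y$ be an eigenfunction for $\l$ normalized by $\|y\|_{L^2(0,1)}=1$. Since $y\in\Dom(\Op(\a,\b))\subset\Dom(\hf(\a,\b))$, testing the eigenvalue equation against $y$ and invoking the form representation \eqref{2.1} yields
\[
\l=\|y'\|_{L^2(0,1)}^2+(V\cR\cT(\a)\cE y,y)_{L^2(0,1)}+(Q\cR\cT(-\b)\cE y,y)_{L^2(0,1)}.
\]
The key observation is that $\cR\cT(s)\cE$ is a contraction on $L^2(0,1)$ for every $s\in\mathds{R}$: continuation by zero preserves the $L^2(\mathds{R})$ norm, translation is an isometry of $L^2(\mathds{R})$, and restriction does not increase the norm. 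Hence, by the Cauchy--Schwarz inequality, the two scalar products above are bounded in modulus by $\|V\|_{L_\infty(0,1)}$ and $\|Q\|_{L_\infty(0,1)}$, respectively.

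Next I would separate real and imaginary parts. As $\|y'\|_{L^2(0,1)}^2$ is real and nonnegative, $\IM\l$ coincides with the imaginary part of the two potential terms, whence $|\IM z^2|=|\IM\l|\leqslant\|V\|_{L_\infty(0,1)}+\|Q\|_{L_\infty(0,1)}$; likewise $\RE\l\geqslant-\big(\|V\|_{L_\infty(0,1)}+\|Q\|_{L_\infty(0,1)}\big)$, since the derivative term is nonnegative. Setting $c_{5}:=\|V\|_{L_\infty(0,1)}+\|Q\|_{L_\infty(0,1)}$, enlarged if necessary so that $c_{5}\geqslant1$, then places $z^2$ in $\{\RE z^2\geqslant-c_{5},\ |\IM z^2|\leqslant c_{5}\}$, that is, $z\in\Pi$.

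Finally, for the inclusion $\Pi\subset\{z:\RE z\geqslant0,\ |\IM z|\leqslant\sqrt2\,c_{5}\}$, I would write $z=u+\iu v$ with $u=\RE z\geqslant0$, so that $\RE z^2=u^2-v^2$ and $\IM z^2=2uv$, and the membership in $\Pi$ reads $u^2-v^2\geqslant-c_{5}$ together with $2u|v|\leqslant c_{5}$. The one mildly delicate point is to extract a bound on $|v|$: if one had $v^2>2c_{5}$, the first inequality would force $u^2>c_{5}$, and then $2u|v|>2\sqrt{c_{5}}\,\sqrt{2c_{5}}=2\sqrt2\,c_{5}>c_{5}$, contradicting the second. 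Thus $|\IM z|=|v|\leqslant\sqrt{2c_{5}}\leqslant\sqrt2\,c_{5}$, the last step using $c_{5}\geqslant1$, which is precisely the asserted inclusion.

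This statement is essentially a corollary of Theorem~\ref{th0}, so I expect no serious obstacle; the only points requiring attention are the contraction property of $\cR\cT(s)\cE$ — this is what makes the potential contributions bounded by a fixed constant rather than by the widening sector coming from $m$--sectoriality — and the elementary estimate of the last paragraph, together with the bookkeeping ensuring that a single constant $c_{5}$ can be made to serve all three inequalities.
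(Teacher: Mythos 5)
Your proof is correct and follows essentially the same route as the paper: both test the eigenvalue equation against a normalized eigenfunction, bound the nonlocal term by a constant independent of $\a,\b$ (the paper cites an external lemma for $\|\cB(\a,\b)\|\leqslant c_5$, while you prove it inline via the contraction property of $\cR\cT(s)\cE$), and then obtain the strip inclusion from the elementary system $u^2-v^2\geqslant -c_5$, $2u|v|\leqslant c_5$. Your explicit normalization $c_5\geqslant 1$ is a worthwhile detail: the paper needs it implicitly to pass from $|\IM z|\leqslant\sqrt{2c_5}$ to $|\IM z|\leqslant\sqrt{2}\,c_5$ but never states it.
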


\begin{proof}
A normalized in $L^2(0, 1)$ eigenfunction $\psi$ associated with $\lambda$ satisfies the integral identity
\[
\|\psi'\|_{L^2(0, 1)}^2+(\mathcal{B}(\alpha, \beta)\psi, \psi)_{L^2(0, 1)}=\lambda\|\psi\|_{L^2(0, 1)}^2.
\]
Taking the imaginary part of this identity, in view of the normalization of the function $\psi$ and the boundedness
of the operator $\mathcal{B}(\alpha, \beta)$ we obtain
\[
|\IM \lambda|=\Big|\IM (\mathcal{B}(\alpha, \beta)\psi, \psi)_{L^2(0, 1)}\Big|\leqslant
\|\mathcal{B}(\alpha, \beta)\|_{L^2(0, 1)}\|\psi\|_{L^2(0, 1)}^2= \|\mathcal{B}(\alpha, \beta)\|_{L^2(0, 1)}.
\]
It follows from \cite[Lemma~1]{Borisov_Polyakov} and \eqref{B} that
\begin{equation*}
\|\mathcal{B}(\alpha, \beta)\|_{L^2(0, 1)}\leqslant c_{5},
\end{equation*}
where $c_{5}$ is some positive constant independent of $\alpha$ and $\beta$. Therefore,
\begin{equation*}
|\IM \lambda|\leqslant c_{5}, \qquad  \RE \lambda\geqslant -c_{5},
\end{equation*}
where $c_{5}$ is some constant independent of $\alpha$, $\beta$, and $\lambda$. This is why for all eigenvalues of the
operator $\mathcal{H}(\alpha, \beta)$ the corresponding values $z$ are located in $\Pi$.
A number $z$ belongs to $\Pi$ if
\begin{equation*}
(\RE z)^2\geqslant (\IM z)^2-c_{5},\qquad |\IM z||\RE z|\leqslant \frac{c_{5}}{2}.
\end{equation*}
These two inequalities imply
\begin{equation*}
(\IM z)^2-c_{5}\leqslant \frac{c_{5}^2}{4|\IM z|^2}
\end{equation*}
and this inequality holds only if
\begin{equation*}
|\IM z|^2 \leqslant \frac{\sqrt{2}+1}{2}c_{5}\leqslant 2c_{5}.
\end{equation*}
Since $\RE z$ is non--negative, we arrive at the embedding for $\Pi$ in \eqref{4.9}. The proof is complete.
 \end{proof}

In order to study the problem \eqref{bp1.1}, \eqref{bp2.1}, we consider an auxiliary Cauchy problem for
the equation in \eqref{bp1.1}, namely,
\begin{equation}\label{4.1}
\begin{aligned}
-&\varphi''(x, z)+ V(x)\varphi(x+\alpha, z)+Q(x)\varphi(x-\beta, z)=z^2 \varphi(x, z), \quad x\in(0, 1),
\\
&\varphi(0, z)=1, \qquad \varphi'(0, z)=0.
\end{aligned}
\end{equation}
The solvability of this  Cauchy problem is described in the next lemma; we recall that the operators
$\mathcal{M}(z,\alpha, \beta)$ and $\cL(z,\alpha, \beta)$ are defined in  \eqref{OperM}.

\begin{lemma}\label{lm4.2}
The Cauchy problem \eqref{4.1} is uniquely solvable for each $z\in\Pi$ obeying the bound
\begin{equation}\label{4.6}
\RE z\geqslant c_{5} e^{\sqrt{2}c_{5}}+1.
\end{equation}
The solution is represented by the series
\begin{equation}\label{4.7}
\varphi(\,\cdot\,, z)= \sum\limits_{j=0}^{\infty} \frac{(\mathcal{M}(z,\alpha, \beta))^j\Phi(\,\cdot\,,z)}{z^j},\qquad
\Phi(x,z):=\cos zx,
\end{equation}
and this series converges in the sense of $C[0,1]$--norm uniformly in $\alpha$, $\beta\in[0, 1]$ and $z\in\Pi$
obeying \eqref{4.6}. The norms of the operators $\mathcal{M}(z,\alpha, \beta)$ and
$\cL(z,\alpha, \beta)$ obey the estimate
\begin{equation}\label{4.13}
\|\mathcal{M}(z,\alpha, \beta)\varphi\|_{C[0, 1]}
\leqslant  c_{5} e^{\sqrt{2}c_{5}}\|\varphi\|_{C[0, 1]},\qquad
\|\cL(z,\alpha, \beta)\varphi\|_{C[0, 1]}
\leqslant  c_{5} e^{\sqrt{2}c_{5}}\|\varphi\|_{C[0, 1]}.
\end{equation}
The identity
\begin{equation}\label{4.10}
\varphi'(\,\cdot\,, z) = \Phi'(\,\cdot\,,z) +  \cL(z,\alpha, \beta)\sum\limits_{j=0}^{\infty}
\frac{(\mathcal{M}(z,\alpha, \beta))^j\Phi(\,\cdot\,,z)}{z^j}
\end{equation}
holds, where the series converges in the sense of $C[0,1]$--norm uniformly in $\alpha$, $\beta\in[0, 1]$ and
$z\in\Pi$ obeying \eqref{4.6}. The sum of this series and the operators $\mathcal{M}(z,\alpha, \beta)$,
$\cL(z,\alpha, \beta)$ are holomorphic in $z\in\Pi$ obeying~\eqref{4.6}.

The Cauchy problem for the equation in \eqref{4.1} with the homogeneous initial conditions has only the zero solution.
\end{lemma}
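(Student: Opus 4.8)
The plan is to recast the differential--difference Cauchy problem \eqref{4.1} as a fixed--point equation for a Volterra--type integral operator and to solve it by a Neumann series whose convergence is controlled uniformly by $\RE z$. First I would rewrite the equation in \eqref{4.1} as $\varphi''+z^2\varphi=\mathcal{B}(\alpha,\beta)\varphi$ and apply the variation of parameters formula. Since $\Phi(x,z)=\cos zx$ solves the homogeneous equation with the data $\varphi(0,z)=1$, $\varphi'(0,z)=0$ and $z^{-1}\sin z(x-t)$ is the Cauchy Green function of $\partial_x^2+z^2$, Duhamel's principle gives the equivalent integral identity
\[
\varphi(\,\cdot\,,z)=\Phi(\,\cdot\,,z)+\frac{1}{z}\mathcal{M}(z,\alpha,\beta)\varphi(\,\cdot\,,z),
\]
that is, $(I-z^{-1}\mathcal{M}(z,\alpha,\beta))\varphi=\Phi$. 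Differentiating this identity in $x$, using $\partial_x\int_0^x g(t)\sin z(x-t)\,dt=z\int_0^x g(t)\cos z(x-t)\,dt$ and the definition of $\mathcal{L}(z,\alpha,\beta)$, I obtain $\varphi'=\Phi'+\mathcal{L}(z,\alpha,\beta)\varphi$, which is \eqref{4.10}.

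Next I would prove the operator bound \eqref{4.13} by estimating the two factors of the integrand separately. For $z\in\Pi$ Lemma \ref{lm4.1} gives $|\IM z|\leqslant\sqrt{2}c_5$, and since $0\leqslant x-t\leqslant 1$ one has $|\sin z(x-t)|\leqslant\cosh\big((x-t)|\IM z|\big)\leqslant e^{\sqrt{2}c_5}$, with the identical bound for $|\cos z(x-t)|$. The remaining factor is controlled through the $L^2$--bound on $\mathcal{B}(\alpha,\beta)$ already established in the proof of Lemma \ref{lm4.1}: by the Cauchy--Schwarz inequality on the unit segment,
\[
\int_0^x|(\mathcal{B}(\alpha,\beta)\varphi)(t)|\,dt\leqslant\|\mathcal{B}(\alpha,\beta)\varphi\|_{L^2(0,1)}\leqslant c_5\|\varphi\|_{L^2(0,1)}\leqslant c_5\|\varphi\|_{C[0,1]}.
\]
Multiplying the two bounds yields \eqref{4.13} for both $\mathcal{M}$ and $\mathcal{L}$.

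With \eqref{4.13} available, the rest is a geometric--series argument. Under \eqref{4.6} one has $|z|\geqslant\RE z\geqslant c_5 e^{\sqrt{2}c_5}+1$, so iterating \eqref{4.13} gives $\|z^{-j}(\mathcal{M}(z,\alpha,\beta))^j\Phi\|_{C[0,1]}\leqslant q^j\|\Phi\|_{C[0,1]}$ with the ratio $q:=|z|^{-1}c_5 e^{\sqrt{2}c_5}\leqslant(c_5 e^{\sqrt{2}c_5})/(c_5 e^{\sqrt{2}c_5}+1)<1$, uniformly in $\alpha,\beta\in[0,1]$ and in such $z$. Hence \eqref{4.7} converges absolutely in $C[0,1]$ uniformly, its sum solves the integral identity, and by the definition of $\mathcal{L}$ the series in \eqref{4.10} converges as well. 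Since every term $z^{-j}(\mathcal{M}(z,\alpha,\beta))^j\Phi(\,\cdot\,,z)$ is holomorphic in $z$ (the kernels $\sin z(x-t)$, $\cos z(x-t)$ and $\Phi$ are entire and one may differentiate under the integral sign), the uniform convergence together with the Weierstrass theorem yields holomorphy of the sum and of $\mathcal{M}$, $\mathcal{L}$. Finally, uniqueness, and in particular the triviality of the solution with homogeneous data, follows from the same contraction estimate: such a solution $\psi$ obeys $\|\psi\|_{C[0,1]}=|z|^{-1}\|\mathcal{M}(z,\alpha,\beta)\psi\|_{C[0,1]}\leqslant q\|\psi\|_{C[0,1]}$ with $q<1$, whence $\psi\equiv 0$.

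The step I expect to be the most delicate is exactly the operator estimate \eqref{4.13}, because the translation operators destroy the usual causal (strictly Volterra) structure: the integrand at $t\in[0,x]$ involves $\varphi(t+\alpha)$ and $\varphi(t-\beta)$, whose arguments may fall outside $[0,x]$ and are cut off by the continuation by zero built into $\mathcal{E}$. One therefore cannot invoke the quasinilpotency of a genuine Volterra operator to obtain convergence for all $z$; the convergence and the uniqueness must instead be extracted from the honest operator--norm bound in $C[0,1]$. This is why the restriction \eqref{4.6} on $\RE z$ is needed, and the uniformity in $\alpha$ and $\beta$ rests entirely on the $\alpha,\beta$--independence of $c_5$ furnished by Lemma \ref{lm4.1}.
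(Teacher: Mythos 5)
Your proposal is correct and follows essentially the same route as the paper: both reduce the Cauchy problem to the integral equation $(\mathcal{I}-z^{-1}\mathcal{M}(z,\alpha,\beta))\varphi=\Phi$, bound $\mathcal{M}$ and $\mathcal{L}$ in $C[0,1]$ via the uniform $L^2$--bound $\|\mathcal{B}(\alpha,\beta)\|\leqslant c_5$ together with $|\sin z(x-t)|,|\cos z(x-t)|\leqslant e^{\sqrt{2}c_5}$ on $\Pi$, invert by a Neumann series under \eqref{4.6}, differentiate the integral equation to get \eqref{4.10}, and obtain holomorphy and the triviality of the homogeneous problem from the same estimates. No gaps to report.
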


\begin{proof}
In a standard way we verify that this Cauchy problem is equivalent to the integral equation
\begin{equation}\label{bp2.4}
\varphi(x, z) = \cos zx+\frac{1}{z}\int\limits_0^{x}(\mathcal{B}(\alpha, \beta)\varphi)(t, z)\sin z(x-t)\,dt,
\end{equation}
which is considered in the space $C[0, 1]$. We rewrite this equation as
\begin{equation}\label{4.4}
\Big(\cI-\frac{1}{z}\mathcal{M}(z,\alpha, \beta)\Big)\varphi(\,\cdot\,, z)=\Phi(\,\cdot\,,z).
\end{equation}

We take $z\in\Pi$ and for an arbitrary $\varphi\in C[0,1]$ we get
\begin{align*}
\|\mathcal{M}(z,\alpha, \beta)\varphi\|_{C[0, 1]} &=
\max\limits_{x\in [0, 1]}\bigg|\int\limits_0^x(\mathcal{B}(\alpha, \beta)\varphi)(t,z)\sin z(x-t)\,dt\bigg| \\
&\leqslant \max\limits_{x\in [0, 1]}\bigg|\int\limits_0^1 |(\mathcal{B}(\alpha, \beta)\varphi)(t, z)||\sin z(x-t)|\,dt\bigg|\\
&\leqslant c_{5} e^{\sqrt{2}c_{5}}\|\varphi\|_{C[0,1]}
\end{align*}
and this proves the first in estimate \eqref{4.13}. The second estimate can be proved in the same way. Hence,
the operator in the left hand side in \eqref{4.4} is invertible provided $\RE z$ is large enough, namely, for
$z$ obeying the condition \eqref{4.6} and for such values $z$ we can solve Equation \eqref{4.4}:
\begin{equation}\label{4.5}
\varphi(\,\cdot\,, z)=\Big(\mathcal{I}-\frac{1}{z}\mathcal{M}(z,\alpha, \beta)\Big)^{-1}\Phi(\,\cdot\,,z).
\end{equation}
Hence, for such $z$ Equation \eqref{bp2.4} and the Cauchy problem \eqref{4.1} are uniquely solvable and the
solution is given by the formula \eqref{4.5}. Moreover, expanding the right hand side in \eqref{4.5} into the Neumann series,
we obtain the representation \eqref{4.7}. The uniform convergence of this series is ensured by the estimate~\eqref{4.13}.

It follows from Equation~\eqref{bp2.4} that
\begin{equation*}
\varphi'(x, z) = -z\sin zx+(\cL(z,\alpha, \beta) \varphi)(x, z).
\end{equation*}
Using the representation \eqref{4.7}, we immediately arrive at the formula \eqref{4.10}, and the uniform convergence
of the series in its right hand side is owing to the estimates \eqref{4.13}. The operators $\mathcal{M}(z,\alpha, \beta)$,
$\cL(z,\alpha, \beta)$ are obviously holomorphic in $z\in\Pi$ obeying \eqref{4.6} and this implies the
holomorphy in the same $z$ for the series in \eqref{4.10}.

It remains to prove that the Cauchy problem for the equation in \eqref{4.1} with the homogeneous initial conditions has only
the zero solution. Indeed, such problem is equivalent to the integral equation
\begin{equation*}
\varphi(x,z) = \frac{1}{z}\int\limits_0^{x}(\mathcal{B}(\alpha, \beta)\varphi)(t, z)\sin z(x-t)\,dt,
\end{equation*}
which can be rewritten as
\begin{equation*}
\Big(\cI-\frac{1}{z}\mathcal{M}(z,\alpha, \beta)\Big)\varphi(\,\cdot\,, z)=0.
\end{equation*}
The latter equation possesses only the zero solution.
\end{proof}

We return back to the problem \eqref{bp1.1}. A non--trivial solution to this problem necessary has a non--zero
derivative at $x=0$ since otherwise it would solve the Cauchy problem the equation in \eqref{4.1} with the homogeneous
initial conditions and then it would be identically zero. Hence, up to a multiplication by an appropriate non--zero
constant, this non--trivial solution solves the Cauchy problem \eqref{4.1}. This is why $z$ produces an eigenvalue of
the operator $\mathcal{H}(\alpha, \beta)$ if and only if the solution to the Cauchy problem satisfies the Neumann condition
at $x=1$, that is,
\begin{equation}\label{4.12}
\varphi'(1, z)=0.
\end{equation}
This is the equation for the values of $z\in\Pi$ obeying \eqref{4.6} and producing the eigenvalues of the
operator $\mathcal{H}(\alpha, \beta)$ by the formula $\lambda=z^2$. Let us study the solvability of this equation.

The identitites \eqref{4.10} and \eqref{4.12} give
\begin{equation}\label{4.17'}
-z\sin z +(\cL(z,\alpha, \beta)\varphi)(1, z)=0.
\end{equation}
Since the function in the left part of this equality is entire in $z$ and is not a polynomial, it has infinitely
many zeros and this means that \eqref{4.17'} possesses infinitely many roots. In what follows, we study only the roots
of Equation \eqref{4.17'} with sufficiently large absolute values, that is,
$$
|z|\geqslant R,
$$
where $R$ is sufficiently large number independent of $\alpha$ and $\beta$. Considering roots in the strip
$\Pi$ obeying~\eqref{4.6}, we partition this strip into rectangles
$$
\Pi_n:=\Big\{z: \RE z\in \Big(\pi n-\frac{\pi}{2}, \pi n+\frac{\pi}{2}\Big), \;
\IM z\in (-\sqrt{2}c_{5}, \sqrt{2}c_{5})\Big\}, \qquad n\geqslant N:=\left[\frac{R}{\pi}\right],
$$
where $[\,\cdot\,]$ is the integer part of a number. In each such rectangle the function $z\mapsto z\sin z$ possesses
a single zero $z=\pi n$, which is simple. In view of the obvious identity
$$
|\sin z|^2=\sin^2\RE z+\sinh^2\IM z,
$$
on the boundary of the rectangles, we have the following lower bound:
$$
|z\sin z|\geqslant Rc_{6} \quad \text{on} \quad \partial \Pi_n, \quad c_{6}:=\min\{1, \sinh \sqrt{2}c_{5}\}.
$$
Choosing $R=2c_{6}^{-1}c_{5} e^{\sqrt{2}c_{5}}\|\varphi\|_{C[0, 1]}$, by the above estimate and the second
estimate in \eqref{4.13} we obtain
$$
|z\sin z|>|(\cL(z,\alpha, \beta)\varphi)(1, z)| \quad \text{on} \quad \partial\Pi_n, \quad n\geqslant N.
$$
Using the Rouche theorem, we conclude that each rectangle $\Pi_n$ possesses exactly one simple root of Equation \eqref{4.17'}.

Our next step is to study the roots of Equation \eqref{4.17'}. We rewrite this equation as
\begin{equation}\label{eqforz}
\sin z-\frac{1}{z}\sum\limits_{j=0}^{\infty}\frac{f_j(z,\alpha, \beta)}{z^{j}}=0,
\end{equation}
where $f_j(z, \alpha, \beta)$ is defined by \eqref{fj}. It follows from the properties of the operators
$\mathcal{M}(z,\alpha, \beta)$ and $\cL(z,\alpha, \beta)$ stated in Lemma~\ref{lm4.2} that the
functions $f_j(z,\alpha, \beta)$ are holomorphic in $z$ obeying \eqref{4.6} and due to \eqref{4.13} we see that
\begin{equation}\label{4.14}
 |f_j(z,\alpha, \beta)|\leqslant c_{5}c_{7}^j,\qquad j\geqslant 0,
\end{equation}
where $c_{5}$ and $c_{7}$ are some positive constants independent of $z$, $\alpha$, and $\beta$.

Let $\xi:=z-\pi n$ and $\eta:=\sin\xi$. Then we rewrite Equation \eqref{eqforz} as
$$
\eta-\sum\limits_{j=0}^{\infty}
\frac{f_j (\arcsin\eta+\pi n,\alpha, \beta)}{(\arcsin\eta+\pi n)^{j+1}}=0.
$$
Finally, we put $\mu:=(\pi n)^{-1}$. Then we rewrite the latter equation as
\begin{equation}\label{equateta}
\eta-\mu F(\eta, \mu, n,\a,\b)=0,
\end{equation}
where
\begin{equation}\label{eqmaineta}
F(\eta, \mu, n,\a,\b)=\sum\limits_{j=0}^{\infty}
\frac{\mu^j f_j (\pi n+\arcsin\eta,\a,\b)}{(1+\mu\arcsin\eta)^{j+1}}.
\end{equation}
Owing to the estimates (\ref{4.14}), the series in the above formula converges uniformly in \begin{equation*}
(\eta,\mu)\in\Om:=\big\{(\eta,\mu):\, |\eta|\leqslant c_{11},\ |\mu|\leqslant c_{11}\big\},
\end{equation*}
where $c_{11}$ is independent of $\eta,$ $\mu$, and $n$. Hence, the function $F$ is holomorphic in
$(\eta,\mu)\in\Omega$ and it is also bounded uniformly in $(\eta,\mu)\in\Omega$ and $n\in\mathds{N}$.
Therefore, the function $F$ can be represented by its Taylor series
\begin{equation}\label{4.16}
F (\eta, \mu, n,\a,b)=\sum\limits_{\gamma\in\mathds{Z}_+^2}A_\gamma(n,\a,\b)\eta^{\gamma_1}
\mu^{\gamma_2}, \qquad \gamma=(\gamma_1, \gamma_2),
\end{equation}
and the coefficients $A_\gamma$ obey the estimates
\begin{equation}\label{estAgamma}
|A_\gamma(n,\a,\b)|\leqslant c_{8}c_{9}^{|\gamma|},
\end{equation}
where $c_{8}$ and $c_{9}$ are some positive constants independent of $\alpha$, $\beta$, $n$, and $|\gamma|=\gamma_1+\gamma_2$.

We prove the solvability of Equation~\eqref{equateta} by applying the implicit function theorem
\cite[Th.~1.3.5, Rem.~1.3.6]{Narasimhan} to Equation~(\ref{equateta}) with the function $F$ from
\eqref{eqmaineta}. As noted above, the function $F$ is holomorphic in $\eta$ and $\mu$ and
it follows from \eqref{eqmaineta} that the left hand side of Equation~\eqref{equateta} vanishes at the point
$(\eta,\mu)=(0, 0)$, while its derivative in $\eta$ at the same point is equal to $1.$ Hence, by the implicit function theorem
for each $n$ there exists a unique solution $\eta=\varphi(\mu, n)$, which is holomorphic in $\mu$ in some neighbourhood
of zero and the size of the neighbourhood, generally speaking, depends on $n$. It also follows from  Equation~\eqref{equateta}
that the Taylor series of $\varphi$ is
\begin{equation}\label{varphiseries}
\varphi(\mu, n,\a,\b)=\mu\sum\limits_{i=0}^\infty B_i(n,\a,\b)\mu^i,
\end{equation}
where $B_i(n,\a,\b)$ are some constants.
This series converges for $|\mu|<r(n,\a,\b),$ where $r(n,\a,\b)$ is some positive number.

\begin{lemma}\label{lm4.3}
There exists a fixed positive constant $c_{10}$   independent of $\alpha$, $\beta$, and $n$ such that
$$
r(n,\a,\b)\geqslant c_{10}>0\quad \text{for all}\quad n.
$$
The coefficients $B_i$, $i\geqslant 0$, satisfy the estimate
\begin{equation}\label{4.24**}
|B_i(n,\a,\b)|\leqslant c_{11} c_{12}^i,\qquad i\geqslant 0,
\end{equation}
where $c_{11}$, $c_{12}$ are positive constants independent of $\alpha$, $\beta$, and $n$.
\end{lemma}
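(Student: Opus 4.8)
The plan is to establish both assertions at once by the classical method of majorants. First I would make the recursion for the coefficients $B_i$ explicit. Substituting the series \eqref{varphiseries} into Equation~\eqref{equateta} and using the Taylor expansion \eqref{4.16}, one obtains the fixed-point identity
\begin{equation*}
\varphi(\mu,n,\a,\b)=\mu\sum_{\gamma\in\mathds{Z}_+^2}A_\gamma(n,\a,\b)\,\varphi(\mu,n,\a,\b)^{\gamma_1}\mu^{\gamma_2}.
\end{equation*}
Expanding $\varphi^{\gamma_1}$ by the multinomial theorem and matching the coefficient of $\mu^{i+1}$ on the two sides produces a recursion expressing each $B_i$ as a polynomial in the coefficients $A_\gamma$ with $|\gamma|\leqslant i$. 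The essential feature of this recursion is that all its numerical coefficients are \emph{non-negative} integers, being products of multinomial coefficients; this is precisely what lets one pass to a majorant.

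Next I would introduce the majorant dictated by the uniform bound \eqref{estAgamma}. Since $|A_\gamma(n,\a,\b)|\leqslant c_8 c_9^{|\gamma|}$ for all $n,\a,\b$, the function
\begin{equation*}
\widetilde F(\eta,\mu):=c_8\sum_{\gamma\in\mathds{Z}_+^2}c_9^{|\gamma|}\eta^{\gamma_1}\mu^{\gamma_2}
=\frac{c_8}{(1-c_9\eta)(1-c_9\mu)}
\end{equation*}
majorizes $F$ uniformly in $n,\a,\b$. The associated majorant equation $\eta=\mu\widetilde F(\eta,\mu)$ is the quadratic
\begin{equation*}
c_9(1-c_9\mu)\eta^2-(1-c_9\mu)\eta+c_8\mu=0,
\end{equation*}
whose branch vanishing at $\mu=0$ is
\begin{equation*}
\widetilde\varphi(\mu)=\frac{(1-c_9\mu)-\sqrt{(1-c_9\mu)\big[(1-c_9\mu)-4c_8c_9\mu\big]}}{2c_9(1-c_9\mu)}.
\end{equation*}
Inside the disc $|\mu|<1/c_9$ the denominator does not vanish, so the only singularity of $\widetilde\varphi$ near the origin is the branch point where the radicand vanishes, located at $\mu=\big(c_9(1+4c_8)\big)^{-1}<1/c_9$. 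Hence $\widetilde\varphi$ is holomorphic for $|\mu|<c_{10}$ with $c_{10}:=\big(c_9(1+4c_8)\big)^{-1}$, a quantity depending only on $c_8,c_9$ and thus independent of $n,\a,\b$.

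To conclude, write $\widetilde\varphi(\mu)=\mu\sum_{i\geqslant0}\widetilde B_i\mu^i$ with $\widetilde B_i\geqslant0$. Because the $\widetilde B_i$ obey the same recursion as the $B_i$ but with $A_\gamma$ replaced by the larger quantities $c_8c_9^{|\gamma|}$, the non-negativity of the recursion coefficients gives the comparison $|B_i(n,\a,\b)|\leqslant\widetilde B_i$ for every $i$, uniformly in $n,\a,\b$. In particular the radius of convergence of \eqref{varphiseries} satisfies $r(n,\a,\b)\geqslant c_{10}>0$, which is the first claim. For the coefficient estimate \eqref{4.24**} I would fix any $\rho\in(0,c_{10})$ and bound $\widetilde\varphi$ on $|\mu|\leqslant\rho$ by a constant $M=M(c_8,c_9)$; Cauchy's inequality then yields $\widetilde B_i\leqslant M\rho^{-(i+1)}$, whence $|B_i(n,\a,\b)|\leqslant c_{11}c_{12}^i$ with $c_{11}:=M/\rho$ and $c_{12}:=1/\rho$, both independent of $n,\a,\b$.

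The heart of the matter, and the only genuine obstacle, is the \emph{uniformity in $n$}: the implicit function theorem invoked after \eqref{4.16} delivers only a $\mu$-neighbourhood of possibly $n$-dependent size, and each $B_i$ depends on $n$ through $A_\gamma(n,\a,\b)$. The mechanism that removes this dependence is the single uniform bound \eqref{estAgamma}, inherited in turn from the uniform operator estimate \eqref{4.13} of Lemma~\ref{lm4.2}: it lets one choose a majorant that no longer contains $n,\a,\b$, after which both $c_{10}$ and the geometric bound \eqref{4.24**} drop out of one explicit algebraic computation that never sees $n$. The remaining point to be checked with some care is the non-negativity of the recursion coefficients justifying $|B_i|\leqslant\widetilde B_i$, but this is the standard Cauchy majorant argument and poses no real difficulty.
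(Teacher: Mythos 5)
Your proposal is correct and follows essentially the same route as the paper: the Cauchy majorant method with the identical majorant $\widetilde F(\eta,\mu)=c_8\big((1-c_9\eta)(1-c_9\mu)\big)^{-1}$, the same quadratic majorant equation solved explicitly, and the inductive coefficient comparison $|B_i|\leqslant\widetilde B_i$ based on the non-negativity of the recursion, concluding with a Cauchy-type estimate for the majorant's Taylor coefficients. The only cosmetic differences are that you locate the branch point explicitly to get $c_{10}=\big(c_9(1+4c_8)\big)^{-1}$ and justify positivity of the majorant coefficients via the recursion, whereas the paper argues positivity through the composition of two series with positive coefficients and invokes the Cauchy--Hadamard formula.
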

\begin{proof}
According to the Cauchy--Hadamard formula, it is sufficient to
estimate the coefficients $B_i$ uniformly in $n$. In order to do it, we use appropriate majorants for the functions
$F$ and $\varphi$. As the majorant for the former, we employ the holomorphic function
$\widetilde{F}=\widetilde{F}(\eta,\mu)$ defined
by the right hand side of~(\ref{4.16}) with the coefficients $A_\g$ replaced by $c_{8} c_{9}^{|\g|},$ namely,
\begin{equation*}
\widetilde{F}(\eta,\mu):=\sum\limits_{\gamma\in\mathds{Z}_+^2}
c_{8} c_{9}^{|\g|} \eta^{\gamma_1}\mu^{\gamma_2}=\frac{c_{8}}{(1-c_{9}\eta)(1-c_{9}\mu)}.
\end{equation*}
Then we consider   Equation \eqref{equateta} with such a function, and we rewrite this equation as
$$
c_{9}\eta^2(\mu)-\eta(\mu)+\frac{c_{8}\mu}{1-c_{9}\mu}=0.
$$
This equation has the unique solution, which vanishes at $\mu=0,$  and this solution can be found explicitly
$$
\eta(\mu)=\frac{1-\sqrt{1-\frac{4c_{8}c_{9}\mu}{1-c_{9}\mu} }}{2c_{9}}.
$$
It is straightforward to verify that the coefficients of    Taylor series about $\mu=0$ of the functions
\begin{equation*}
\mu\mapsto \frac{4c_{8}c_{9}\mu}{1-c_{9}\mu},\qquad u\mapsto \frac{1-\sqrt{1-u}}{2c_{9}}
\end{equation*}
are real and positive. Hence, the same is true for the function $\eta(\mu)$ as well as for
the function
$$
\widetilde{\varphi}(\mu):=\frac{1-\sqrt{1-\frac{4c_{8}c_{9}\mu}{1-c_{9}\mu}}}{2c_{9}\mu}.
$$
It is clear that the latter function is holomorphic in sufficiently small $\mu$ and by $\tau_i,$ $i\geqslant 0$, we denote its
Taylor coefficients,
\begin{equation*}
\widetilde{\varphi}(\mu)=\sum\limits_{i=0}^{\infty} \tau_i\mu^i,\qquad \tau_0=c_{8},\qquad \tau_i>0.
\end{equation*}
Since the function $\widetilde{\varphi}$ is holomorphic, we have standard bounds for $\tau_i$
\begin{equation*}
0<\tau_i\leqslant c_{11} c_{12}^i,
\end{equation*}
where $c_{11}$ and $c_{12}$ are some positive constants independent of $i,$ and of course, of $\alpha$, $\beta$, and $n$.

Now we are going to show that
\begin{equation}\label{4.24}
|B_i(n,\a,\b)|\leqslant \tau_i,\qquad i\geqslant0.
\end{equation}
We substitute the representation \eqref{varphiseries} into \eqref{eqmaineta} and obtain
\begin{align*}
\sum\limits_{i=0}^\infty B_i(n,\a,\b)\mu^i=&\sum\limits_{\gamma\in\mathds{Z}_+^2}
A_\gamma(n,\a,\b)\mu^{\gamma_2}
\bigg(\sum\limits_{i=0}^\infty B_i(n,\a,\b)\mu^i\bigg)^{\gamma_1}
\\
=&\sum\limits_{i=0}^\infty\mu^i\sum\limits_{\gamma\in\mathds{Z}_+^2}
\sum\limits_{\substack{\varkappa\in\mathds{Z}_+^{\gamma_1} \\ |\varkappa|+\gamma_2=i}}
A_\gamma(n,\a,\b)B_{\varkappa_1}(n,\a,\b)\cdot\ldots\cdot B_{\varkappa_{\gamma_1}}(n,\a,\b),
\end{align*}
where $\varkappa=(\varkappa_1, \dots, \varkappa_{\gamma_1})$. Hence, the coefficients
$B_i$ satisfy the recurrent relations
\begin{equation}\label{maincoeff}
B_i(n,\a,\b)=\sum\limits_{\gamma\in\mathds{Z}_+^2}
\sum\limits_{\substack{\varkappa\in\mathds{Z}_+^{\gamma_1} \\ |\varkappa|+\gamma_2=i}}
A_\gamma(n,\a,\b)B_{\varkappa_1}(n,\a,\b)\cdot\ldots\cdot B_{\varkappa_{\gamma_1}}(n,\a,\b),
\end{equation}
where we adopt that $B_{\varkappa_1}(n,\a,\b)\cdot\dots\cdot B_{\varkappa_{\gamma_1}}(n,\a,\b):=1$
if $\gamma_1=\gamma_2=0$ or $\varkappa=0$.
Then the formula \eqref{maincoeff} expresses the coefficients $B_i$ in terms of $B_0, \dots, B_{i-1}$.

It is clear that $B_0(n,\a,\b)=A_0(n,\a,\b)$ and hence, by \eqref{estAgamma}, we immediately get
the estimate \eqref{4.24} for $B_0(n,\a,\b)$. We proceed by induction. Suppose that the estimate
\eqref{4.24} is valid for  $i\leqslant k-1$ and let us establish it for $i=k$. Using the induction assumption,
the formulas \eqref{estAgamma}, \eqref{maincoeff}, and the positivity of the coefficients $\tau_i$, we obtain
\begin{equation}\label{prom1''}
\begin{aligned}
|B_k(n,\a,\b)|\leqslant & \sum\limits_{\gamma\in\mathds{Z}_+^2}
\sum\limits_{\substack{\varkappa\in\mathds{Z}_+^{\gamma_1} \\ |\varkappa|+\gamma_2=k}}
|A_\gamma(n,\a,\b)||B_{\varkappa_1}(n,\a,\b)|\cdot\dots\cdot |B_{\varkappa_{\gamma_1}}(n,\a,\b)|
\\
\leqslant & c_{8}\sum\limits_{\gamma\in\mathds{Z}_+^2}
\sum\limits_{\substack{\varkappa\in\mathds{Z}_+^{\gamma_1} \\ |\varkappa|+\gamma_2=k}}
c_{9}^{|\gamma|}\tau_{\varkappa_1}\cdot\dots\cdot\tau_{\varkappa_{\gamma_1}}.
\end{aligned}
\end{equation}
Then we make the same calculations with the function
$\widetilde{F}(\mu\widetilde{\varphi}(\mu), \mu)$ and we see easily that the coefficients at $\mu^k$ in its Taylor series is exactly the right hand side of \eqref{prom1''}. Since the function $\mu\widetilde{\varphi}(\mu)$ solves Equation~\eqref{equateta}
with $F$ replaced by $\widetilde{F}$, we conclude that the coefficient at $\mu^k$ in the Taylor series for $\widetilde{F}(\mu\widetilde{\varphi}(\mu), \mu)$ coincides with the coefficients at $\mu^k$ in the Taylor series for $\widetilde{\varphi}(\mu)$, that is, with $\tau_k$. Then we can replace the right hand side of \eqref{prom1''} by $\tau_k$ and this proves the estimate \eqref{4.24**}
for $B_k(n, \alpha, \beta)$.

Now we apply the Cauchy--Hadamard formula for the convergence radius of the series \eqref{varphiseries} and see that it is estimated from below by the constant $c_{10}$. The proof is complete.
\end{proof}

Lemma~\ref{lm4.3} shows that the unique solution $\eta=\varphi(\mu, n)$ of Equation~\eqref{equateta} can be represented by the series
\eqref{varphiseries} and this series converges for $|\mu|<c_{10}$, where $c_{10}$ independent of $n$, while the coefficients in \eqref{varphiseries} satisfy \eqref{maincoeff}.
The function $\eta\mapsto \arcsin\eta$ is holomorphic in $|\eta|<1$ and its Taylor series converges for these values of $\eta.$ Since the function $\eta(\mu)$ is   holomorphic and $\eta(0)=0$, the function $\xi(\mu)=\arcsin\eta(\mu)$ is also holomorphic and its Taylor series reads
\begin{equation}\label{xi111}
\xi(\mu)=\mu\sum\limits_{i=0}^\infty \rho_i(n,\alpha, \beta)\mu^i,
\end{equation}
where $\rho_i(n,\alpha, \beta)$ are some coefficients. This series surely converges provided $|\eta(\mu)|\leqslant \frac{1}{2}$ and in view of Lemma~\ref{lm4.3}, the latter inequality holds uniformly in $\a,$ $\b$ and $n$ provided
\begin{equation}\label{4.2}
|\mu|\leqslant c_{13},
\end{equation}
where $c_{13}$ is some fixed positive constant independent of $\a,$ $\b,$ $\mu$, and $n.$ It also follows from \eqref{varphiseries}, \eqref{4.24**}, and \eqref{xi111} that the coefficients $\rho_i$,
$i\geqslant 0$,   satisfy the estimates (\ref{4.3}).

Now we are going to find explicitly the  coefficients $\rho_i.$ Returning to Equation \eqref{eqforz}, we obtain
\begin{equation}\label{prom123}
(-1)^n\sin\xi(\mu)-\sum\limits_{j=0}^\infty\frac{\mu^{j+1}f_j
\big(\xi(\mu)+\pi n,\a,\b\big)}{(1+\xi\mu)^{j+1}}=0.
\end{equation}
Since all functions in the last equation are holomorphic, we can expand them into the Taylor series. We have
\begin{align*}
f_j\big(\xi(\mu)+\pi n,\a,\b\big) &=
\sum\limits_{s=0}^\infty\frac{\p_z^s f_j(\pi n,\a,\b)}{s!}\xi^s(\mu), \qquad
\sin\xi(\mu) =\xi(\mu)+\sum\limits_{s=1}^\infty\frac{(-1)^s}{(2s+1)!}\xi^{2s+1}(\mu), \\
\frac{\mu^{j+1}}{\big(1+\mu\xi(\mu)\big)^{j+1}} &= \sum\limits_{k=0}^\infty\frac{(-1)^k(k+j)!}{k!j!}\xi^k(\mu)\mu^{k+j+1}.
\end{align*}
Substituting these series into \eqref{prom123}, we obtain
$$
(-1)^n\Big(\xi(\mu)+\sum\limits_{s=1}^\infty\frac{(-1)^s}{(2s+1)!}\xi^{2s+1}(\mu)\Big)
-\sum\limits_{j=0}^\infty\sum\limits_{k=0}^\infty
\sum\limits_{s=0}^\infty\frac{(-1)^k(k+j)!\p_z^s f_j (\pi n,\a,\b)}{k!j!s!}\xi^{k+s}(\mu)\mu^{k+j+1}=0.
$$
In terms of new summation indices $t=k+s$ and $l=k+j+1$ this identity can be rewritten as
\begin{equation}\label{prom124}
(-1)^n\bigg(\xi(\mu)+\sum\limits_{s=1}^\infty\frac{(-1)^s}{(2s+1)!}\xi^{2s+1}(\mu)\bigg)
-\sum\limits_{l=1}^\infty\sum\limits_{t=0}^\infty E_{t, l}(n,\a,\b)\xi^{t}(\mu)\mu^{l}=0,
\end{equation}
where $E_{t, l}(n,\a,\b)$ is given by \eqref{Etl}. The identity \eqref{xi111} gives
$$
\xi^t(\mu)=\mu^t\Big(\sum\limits_{i=0}^\infty \rho_i(n,\alpha, \beta)\mu^i\Big)^t=
\sum\limits_{p=0}^\infty\mu^{t+p}\sum\limits_{\substack{\zeta\in\mathds{Z}_+^t \\ |\zeta|=p}}
\rho_{\zeta_1}(n,\a,\b)\cdot\ldots\cdot\rho_{\zeta_t}(n,\a,\b),
$$
where we also use (\ref{2.2}). Substituting this relation into \eqref{prom124}, we obtain
\begin{align*}
(-1)^n\bigg(\mu\sum\limits_{i=0}^\infty \rho_i(n,\alpha, \beta)\mu^i
&+\sum\limits_{s=1}^\infty\sum\limits_{m=0}^\infty H_{s, m}(n,\a,\b)\mu^{m+2s+1}\bigg) \\
&=\sum\limits_{l=1}^\infty\sum\limits_{t=0}^\infty\sum\limits_{p=0}^\infty E_{t, l}(n,\a,\b)\mu^{l+t+p}
\sum\limits_{\substack{\zeta\in\mathds{Z}_+^t \\ |\zeta|=p}}
\rho_{\zeta_1}(n,\a,\b)\cdot\ldots\cdot\rho_{\zeta_t}(n,\a,\b),
\end{align*}
where $H_{s, m}(n,\a,\b)$ is defined by \eqref{Hsm}.
Letting $i:=l+t+p-1$, we get
\begin{equation*}
(-1)^n\bigg(\sum\limits_{i=0}^\infty\rho_i(n,\alpha, \beta)\mu^i+\sum\limits_{s=1}^\infty\sum\limits_{m=0}^\infty
H_{s, m}(n,\a,\b)\mu^{m+2s}\bigg)=\sum\limits_{i=0}^\infty\omega_i(n,\alpha, \beta)\mu^i,
\end{equation*}
where $\omega_i(n,\alpha, \beta)$ are given by \eqref{Gp}. This identity imply
 the recurrent formula \eqref{polDi} for the coefficients $\rho_i.$

It remains to return to the original variables. Replacing $\mu$ by $(\pi n)^{-1},$ we conclude that the condition (\ref{4.2}) is satisfied provided
\begin{equation}\label{4.8}
 n\geqslant n_0, \qquad n_0:=\max\left\{\left[\frac{1}{\pi c_{13}}\right],\, \left[\frac{2c_{2}}{\pi}\right]\right\}+1
\end{equation}
and for such $n$ and all $\a,$ $\b$ we have the roots of Equation~(\ref{4.17'})
\begin{equation*}
z_n(\a,\b)=\pi n+\frac{1}{\pi n}\sum\limits_{i=0}^\infty\frac{\rho_i(n,\alpha, \beta)}{(\pi n)^i}.
\end{equation*}
Here the series  converges uniformly and absolutely in $\alpha$, $\beta$,
and large enough $n$.  Recovering then the eigenvalues of operator $\Op(\a,\b)$ by the formula $\l_n(\a,\b)=z_n^2(\a,\b),$ we arrive at~\eqref{prom11z},~\eqref{polDi}. Since by Lemma~\ref{lm4.2} the Cauchy  problem for the equation in \eqref{4.1}
has a unique solution, in view of the arguing before Equation~(\ref{4.12}) we conclude that the found eigenvalues  $\l_n(\a,\b)$ are simple. The estimate (\ref{4.19}) is immediately implied by
(\ref{4.3}), (\ref{4.8}).
The proof of Theorem~\ref{th1} is complete.

\section{Asymptotics for eigenvalues and basis property}\label{sec5}

In this section we prove  Theorems~\ref{th2},~\ref{th3}.

\begin{proof}[Proof of Theorem~\ref{th2}]
 It follows from  (\ref{4.19})  that
\begin{equation*}
\bigg|\sum\limits_{i=2}^{\infty} \frac{\rho_i(n,\alpha, \beta)}{(\pi n)^i}\bigg| \leqslant \frac{2c_{1}c_{2}^2}{\pi^2 n^2},\qquad \bigg|\sum\limits_{i=4}^{\infty} \frac{\rho_i(n,\alpha, \beta)}{(\pi n)^i}\bigg| \leqslant \frac{2c_{1}c_{2}^4}{\pi^4 n^4}\qquad\text{for}\quad n\geqslant n_0.
\end{equation*}
Then it follows from  (\ref{prom11z}) and (\ref{4.3}) that
\begin{align}\label{5.2}
&\l_n(\a,\b)=\pi^2 n^2 + 2\rho_0(n,\a,\b)+\frac{2\rho_1(n,\a,\b)}{\pi n} + \frac{2\rho_2(n,\a,\b)+\rho_0^2(n,\a,\b)}{\pi^2 n^2}+O(n^{-3}),
\\
&\l_n(\a,\b)=\pi^2 n^2 + 2\rho_0(n,\a,\b)+ O(n^{-1}),  \label{5.3}
\end{align}
as $n\to+\infty,$ where the estimates for the error terms are uniform in $\a$ and $\b.$

The formulas   \eqref{polDi} and \eqref{Gp} give
\begin{equation}\label{D0}
\begin{aligned}
\rho_0(n,\alpha, \beta) =&(-1)^nE_{0, 1}(n,\a,\b),
\\
\rho_1(n,\alpha, \beta) =&(-1)^n\big(E_{1, 1}(n,\a,\b)\rho_0(n,\alpha, \beta)
+E_{0, 2}(n,\a,\b)\big), \\
\rho_2(n,\a,\b) =& \frac{(\rho_0(n,\alpha, \beta))^3}{3!}
 +(-1)^n  E_{1, 1}(n,\a,\b)\rho_1(n,\alpha, \beta)
\\
&+(-1)^n E_{2, 1}(n,\a,\b) \rho_0^2(n,\alpha, \beta)
+(-1)^n E_{1, 2}(n,\a,\b)\rho_0(n,\alpha, \beta)
\\
&+(-1)^n E_{0, 3}(n,\a,\b).
\end{aligned}
\end{equation}
Using the definition \eqref{Etl}, we obtain
\begin{align*}
&E_{0, 1}(n,\a,\b)=f_0(\pi n,\a,\b), &&
E_{0, 2}(n,\a,\b)= f_1(\pi n,\a,\b),
\\
&E_{1, 1}(n,\a,\b)=\partial_z f_0(\pi n,\a,\b),  &&
E_{2, 1}(n,\a,\b)=\frac{\partial_z^2f_0(\pi n,\alpha,\beta)}{2},
\\
&E_{1, 2}(n,\a,\b)= -f_0(\pi n, \alpha, \beta)
+ \partial_z f_1(\pi n,\a,\b), && E_{0, 3}(\pi n,\a,\b)=f_2(\pi n,\a,\b).
\end{align*}
Substituting these formulas into (\ref{D0}), we obtain
  the formulas \eqref{coeffD}.

Our next step is the formulas for  the functions $f_j$. We introduce the
functions for $z\in\mathds{C}$, $s\in [0, 1]$:
\begin{align*}
\Xi_0(z, s) = &V(s)\big(\mathcal{R}\cos z(\,\cdot\,+\alpha)\big)(z,s) +Q(s)\big(\mathcal{R}\cos z(\,\cdot\,-\beta)\big)(z,s), \\
\Xi_1(z, s) =& V(s)\mathcal{R}\int\limits_0^{s+\alpha}\sin \pi n(s+\alpha-\tau)\Xi_0(z, \tau)\,d\tau \\
& +Q(s)\mathcal{R}\int\limits_0^{s-\beta}\sin \pi n(s-\beta-\tau)\Xi_0(z, \tau)\,d\tau.
\end{align*}
The formulas \eqref{B}, \eqref{OperM}, and \eqref{fj} imply
\begin{equation}
\label{polf0}
\begin{aligned}
f_0(\pi n,\a,\b) &= (-1)^n\int\limits_0^{1-\alpha}V(t)\cos\pi n(t+\alpha)\cos\pi nt\,dt
+(-1)^n\int\limits_\beta^1Q(t)\cos\pi n(t-\beta)\cos\pi nt\,dt,
\end{aligned}
\end{equation}
and hence,
\begin{equation} \label{polf0'}
\begin{aligned}
\partial_z f_0(\pi n,\a,\b)=& (-1)^{n+1}\int\limits_0^{1-\alpha}V(t)\big((t+\alpha)\sin\pi n(t+\alpha)\cos\pi nt
\\
&\hphantom{(-1)^{n+1}\int\limits_0^{1-\alpha}V(t)\big(}-(1-t)\cos\pi n(t+\alpha)\sin\pi nt\big)\,dt \\
& +(-1)^{n+1}\int\limits_\beta^1Q(t)\big((t-\beta)\sin\pi n(t-\beta)\cos\pi nt
\\
&\hphantom{+(-1)^{n+1}\int\limits_\beta^1Q(t)\big(}-(1-t)\cos\pi n(t-\beta)\sin\pi nt\big)\,dt,
\\
 \partial_z^2f_0(\pi n,\alpha,\beta)=&(-1)^{n+1}\int\limits_0^{1-\alpha}V(t)\big((t+\alpha)^2
+(1-t)^2\big)\cos\pi n(t+\alpha)\cos\pi nt\,dt \\
& +2(-1)^{n+1}\int\limits_0^{1-\alpha}V(t)(t+\alpha)(1-t)\sin\pi n(t+\alpha)\sin\pi nt\,dt \\
& +(-1)^{n+1}\int\limits_\beta^1 Q(t)\big((t-\beta)^2+(1-t)^2\big)\cos\pi n(t-\beta)\cos\pi nt\,dt
\\
& +2(-1)^{n+1}\int\limits_\beta^1 Q(t)(t-\beta)(1-t)\sin\pi n(t-\beta)\sin\pi nt\,dt.
\end{aligned}
\end{equation}
In the same way we obtain
\begin{equation}\label{polf1}
\begin{aligned}
f_1(\pi n,\a,\b) = &(-1)^n\int\limits_0^1\Bigg(V(t)\mathcal{R}\bigg(\int\limits_0^{\,\cdot\,+\alpha}
\Xi_0(\pi n, s)\sin \pi n(\,\cdot\,+\alpha-s)\,ds\bigg)(t,\a) \\
& +Q(t)\mathcal{R} \bigg(\int\limits_0^{\,\cdot\,-\beta}\Xi_0(\pi n, s)\sin \pi n(\,\cdot\,-\beta-s)\,ds\bigg)(t,\beta)\Bigg)\cos \pi nt\,dt,\\
 \partial_z f_1(\pi n,\a,\b)= &(-1)^n\int\limits_0^1\Bigg(V(t)\mathcal{R}\bigg(\int\limits_0^{\,\cdot\,+\alpha}
\Xi_0(\pi n, s)\sin \pi n(\,\cdot\,+\alpha-s)\,ds\bigg)(t,\a) \\
& +Q(t)\mathcal{R}\bigg(\int\limits_0^{\,\cdot\,-\beta}\Xi_0(\pi n, s)\sin \pi n(\,\cdot\,-\beta-s)\,ds\bigg)(t,\b)\Bigg)(1-t)\sin\pi nt\,dt \\
& +(-1)^n\int\limits_0^1\Bigg(V(t)\mathcal{R} \bigg(\int\limits_0^{\,\cdot\,+\alpha}\Big(-V(s)(s+\alpha)\mathcal{R}\big(\sin \pi n(\,\cdot\,+\alpha)\big)(s,\a) \\
& -Q(s)(s-\beta)\mathcal{R}\big(\sin \pi n(\,\cdot\,-\beta)\big)(s,\beta)\Big)\sin \pi n(t+\alpha-s)\,ds\bigg)(t,\a,\b) \\
& +V(t)\mathcal{R}\bigg(\int\limits_0^{\,\cdot\,+\alpha}\Xi_0(\pi n, s)(\,\cdot\,+\alpha-s)\cos \pi n(\,\cdot\,+\alpha-s)\,ds\bigg)(t,\a) \\
& +Q(t)\mathcal{R}\bigg(\int\limits_0^{\,\cdot\,-\beta} \Big(-V(s)(s+\alpha)\mathcal{R}\big(\sin \pi n(\,\cdot\,+\alpha)\big)(s,\a) \\
& -Q(s)(s-\beta)\mathcal{R}\big(\sin \pi n(\,\cdot\,-\beta)\big)(s,\b)\Big)\sin \pi n(t-\beta-s)\,ds\bigg)(t,\a,\b) \\
& +Q(t)\mathcal{R}\bigg(\int\limits_0^{\,\cdot\,-\beta}\Xi_0(\pi n, s)(\,\cdot\,-\beta-s)\cos \pi n(\,\cdot\,-\beta-s)\,ds\bigg)\Bigg)\cos \pi nt\,dt,
\end{aligned}
\end{equation}
and, finally,
\begin{equation}\label{polf2}
\begin{aligned}
f_2(\pi n,\a,\b)
=(-1)^n\int\limits_0^1\Bigg(&V(t)\mathcal{R}\bigg(\int\limits_0^{\,\cdot\,+\alpha} \Xi_1(\pi n, s)\sin \pi n(\,\cdot\,+\alpha-s)\,ds\bigg)(t,\a)
\\
& +Q(t)\mathcal{R}\bigg(\int\limits_0^{\,\cdot\,-\beta}\Xi_1(\pi n, s)\sin \pi n(\,\cdot\,-\beta-s)\,ds\bigg)\Bigg)\cos \pi nt\,dt.
\end{aligned}
\end{equation}
Substituting \eqref{polf0} into \eqref{5.3}, we obtain \eqref{mainasymptinfty}.

Let $V$ and $Q$ belongs to $W_\infty^2(0, 1)$.
Then we can integrate by parts in
\eqref{polf0}, (\ref{polf0'}), (\ref{polf1}), \eqref{polf2}. We have
\begin{align*}
f_0(\pi n,\a,\b)= \frac{(-1)^n}{2}\bigg(&G_{n, 1}(\alpha, \beta)
+\frac{G_{n, 4}(\alpha, \beta)}{2\pi n}
\\
&+\frac{\big(V'(1-\alpha)-V'(0)\big)\cos\pi n\alpha}{4\pi^2n^2}
+\frac{\big(Q'(1)-Q'(\beta)\big)\cos\pi n\beta}{4\pi^2 n^2}\\
&-\frac{1}{4\pi^2n^2}\int\limits_0^{1-\alpha}V''(t)\cos\pi n(2t+\alpha)\,dt
-\frac{1}{4\pi^2n^2}\int\limits_\beta^{1}Q''(t)\cos\pi n(2t-\beta)\,dt\bigg),
\end{align*}
and
\begin{align*}
\partial_z f_0(\pi n,\a,\b) &= \frac{(-1)^{n+1}}{2}\bigg(G_{n, 2}(\alpha, \beta)
+\frac{G_{n, 5}(\alpha, \beta)}{2\pi n}\bigg) +O(n^{-2}), \\
 \partial_z^2 f_0(\pi n,\alpha,\beta) &=\frac{(-1)^{n+1}G_{n, 3}(\alpha, \beta)}{2}+O(n^{-1}),
  \\
f_1(\pi n,\a,\b) &=\frac{(-1)^nG_{n, 6}(\alpha, \beta)}{2\pi n}+O(n^{-2}),
\\
 \partial_z f_1(\pi n,\a,\b)&=O(n^{-1}),\qquad f_2(\pi n,\a,\b)=O(n^{-1}),
\end{align*}
where $G_{n, j}$, $j=1, 2, 3, 4, 5, 6$, were defined in \eqref{Gnj}.
Substituting these asymptotics into \eqref{coeffD} and the result into \eqref{5.2}, we obtain
the formula \eqref{lambdan1sm}. The proof is complete.
\end{proof}

\begin{proof}[Proof of Theorem~\ref{th3}]
It follows from \cite[Theorem~3.4, Remark~3.5]{MityaginSiegl} for $\alpha=0$ and $\gamma=2$ that the system
of eigenfunctions and generalized eigenfunctions forms of the operator $\mathcal{H}(\alpha, \beta)$ forms
the Bari basis in the space $L^2(0, 1)$. The proof is complete.
\end{proof}

\section*{Declarations}

\textbf{Funding}

This work is supported by the Russian Science
Foundation, grant no. 23-11-00009,\\ https: //rscf.ru/project/23-11-00009/

\textbf{Ethical Approval}

Not applicable

\textbf{Competing interests}

The authors have no competing interests to declare that are relevant to the
content of this article.

\textbf{Authors' contributions}

Not applicable

\textbf{Availability of data and materials}

Not applicable


\begin{thebibliography}{99}
\bibitem{Skub_Book}
A.L. Skubachevskii. Elliptic functional differential equations and applications.
Birkhauser Verlag, Basel, 1997.

\bibitem{SkubUMN}
A.L. Skubachevskii. Boundary-value problems for elliptic functional--differential
equations and their applications. Russian Math. Surveys. 2016. V.~71(5). P.~801--906.

\bibitem{NevSkub}
D. Neverova, A. Skubachevskii. On the smoothness of generalized solutions to boundary
value problems for strongly elliptic differential--difference equations on a
boundary of neighboring subdomains. Russ. J. Math. Phys. 2015. V.~22. P.~504--517.

\bibitem{SkubMathNach}
A.L. Skubachevskii. Elliptic differential--difference operators with degeneration
and the Kato square root problem. Math. Nachr. 2018. V.~291. P.~2660--2692.

\bibitem{LiikoSkub}
V.V. Liiko, A.L. Skubachevskii. Mixed problems for strongly elliptic
differential-difference equations in a cylinder. Math. Notes. 2020. V.~107(5). P.~770--790.

\bibitem{Muravnik11}
Muravnik, A.B. Elliptic differential--difference equations with differently directed
translations in half-spaces. Ufa Math. J. 2021. V.~13. P.~104--112.

\bibitem{Kamenskii}
G.A. Kamenskii. Extrema of nonlocal functional and boundary value problems for
functional differential equations. Nova Science Publishers, New York, 2007.

\bibitem{VorotSkub}
R.Yu. Vorotnikov, A.L. Skubachevskii. Smoothness of generalized eigenfunctions of
differential--difference operators on a finite interval. Math. Notes. 2023. V.~114(5).
P.~1002--1020.

\bibitem{Muravnik1}
A.B. Muravnik. On the Cauchy problem for differential--difference parabolic
equations with high-order nonlocal terms of general kind. Discr. Contin.
Dyn. Syst. 2006. V.~16. P.~541--561.

\bibitem{Muravnik2}
A. Muravnik. On the half--plane Dirichlet problem for differential--difference
elliptic equations with several nonlocal terms. Math.
Model. Nat. Phenom. 2017. V.~12. P.~130--143.

\bibitem{Muravnik3}
A.B. Muravnik. Functional differential parabolic equations: Integral transformations
and qualitative properties of solutions of the Cauchy problem. J. Math. Sci.
2016. V.~216. P.~345--496.

\bibitem{Muravnik4}
N.V. Zaitseva, A.B. Muravnik. Smooth solutions of hyperbolic equations with translation
by an arbitrary vector in the free term. Differ. Equ. 2023. V.~59. P.~371--376.

\bibitem{Muravnik5}
A.B. Muravnik. On Cauchy problem for parabolic differential--difference equations.
Nonlinear Anal. Theory Methods Appl. 2002. V.~51. P.~215--238.

\bibitem{Rosov1}
L.E. Rossovskii, A.A. Tovsultanov. Elliptic functional differential equation with
affine transformations. J. Math. Anal. Appl. 2019. V.~480. 123403.

\bibitem{Rosov2}
L.E. Rossovskii. On the spectral stability of functional--differential equations.
Math. Notes. 2011. V.~90. P.~867--881.

\bibitem{Rosov3}
L. Rossovskii. Elliptic functional differential equations with incommensurable
contractions. Math. Model. Nat. Phenom. 2017. V.~12. P.~226--239.

\bibitem{Rosov4}
L.E. Rossovskii. Elliptic functional differential equations with contractions and
extensions of independent variables of the unknown function. J. Math. Sci.
2017. V.~223. P.~351--493.

\bibitem{FreilingYurko}
G. Freiling, V.A. Yurko. Inverse problems for Sturm~---~Liouville differential operators
with a constant delay. Appl. Math. Lett. 2012. V.~25(11). P.~1999--2004.

\bibitem{Yang}
C.F. Yang. Inverse nodal problems for the Sturm~---~Liouville operator with a constant
delay. J. Diff. Equat. 2014. V.~257(4). P.~1288--1306.

\bibitem{Bond_Yurko}
N. Bondarenko, V. Yurko. An inverse problem for Sturm~---~Liouville differential operators
with deviating argument. Appl. Math. Lett. 2018. V.~83. P.~140--144.

\bibitem{DjuricVladicic}
N. Djuric, V. Vladicic. Incomplete inverse problem for Sturm~---~Liouville type
differential equation with constant delay. Results Math. 2019. V.~74(161). 13.

\bibitem{WangShiehMiao}
Y.P. Wang, C.T. Shieh, H.Y. Miao. Reconstruction for Sturm~---~Liouville equations
with a constant delay with twin-dense nodal subsets. Inv. Probl. Sci. Eng. 2019.
V.~27(5). P.~608--617.

\bibitem{DjuricButerin1}
N. Djuric, S. Buterin. On an open question in recovering Sturm~---~Liouville--type
operators with delay. Appl. Math. Lett. 2021. V.~113. 106862.

\bibitem{DjuricButerin2}
N. Djuric, S. Buterin. On non-uniqueness of recovering Sturm~---~Liouville operators
with delay. Commun. Nonlinear Sci. Numer. Simulat. 2021. V.~102. 105900.

\bibitem{Shahriari1}
M. Shahriari. Inverse problem for Sturm~---~Liouville differential operators with two
constant delays. Turkish J. Math. 2019. V.~43(2). P.~965--976.

\bibitem{Shahriari2}
M. Shahriari. Inverse problem for Sturm~---~Liouville differential operators with finite
number of constant delays. Turkish J. Math. 2020. V.~44(3). P.~778--790.

\bibitem{VojvodicPikula}
B. Vojvodic, M. Pikula, V. Vladicic. Inverse problems for Sturm~---~Liouville differential
operators with two constant delays under Robin boundary conditions. Results Appl. Math.
2020. V.~5. 100082.

\bibitem{ButerinMalShieh}
S.A. Buterin, M.A. Malyugina, C.-T. Shieh. An inverse spectral problem for
second--order functional--differential pencils with two delays. Appl. Math. Comp.
2021. V.~411. 126475.

\bibitem{Borisov_Polyakov}
D.I. Borisov, D.M. Polyakov. Resolvent convergence for differential--difference
operators with small variable translations. Mathematics. 2023. V.~11(20). 4260.

\bibitem{Borisov_PolyakovDAN}
D.I. Borisov, D.M. Polyakov. Asymptotics for eigenvalues of Schr\"odinger operator with small translation and
Dirichlet condition. Dokl. Math. 2024. V. 109(3). P. 227--231.

\bibitem{Borisov_PolyakovUMJ}
D.I. Borisov, D.M. Polyakov. Uniform asymptotics for eigenvalues of model Schr\"odinger operator with small
translations. Ufa Math. J. 2024. V.~16(3). P.~1--20.

\bibitem{Borisov_PolyakovIZV}
D.I. Borisov, D.M. Polyakov. Spectral asymptotics for Schr\"odinger operator perturbed by translation operator.
Izv. Math. 2025. V. 89(3). (to appear)


\bibitem{RJMP2025} D.I. Borisov, D.M. Polyakov. Uniform spectral asymptotics for Schr\"odinger operator with translation in free term and periodic boundary conditions. Russ. J. Math. Phys. 2025, to appear.

\bibitem{Reddy}
S.G. Reddy, P.J. Schmidt, D.S. Henningson. Pseudospectra of the Orr~---~Sommerfeld
operator. SIAM J. Appl. Math. 1993. V.~53(1). P.~15--47.

\bibitem{Trefethen}
L.N. Trefethen. Pseudospectra of linear operators. ISIAM 95: Proc. Third Int.
Congress Industrial Appl. Math. Acad. Verlag, Berlin, 1996. P.~401--434.

\bibitem{ShkalikovNotes97}
A.A. Shkalikov. The limit behavior of the spectrum for large parameter values in
a model problem. Math. Notes. 1997. V.~62(6). P.~796--799.

\bibitem{TumanovShkalikov02}
S.N. Tumanov, A.A. Shkalikov. On the limit behaviour of the spectrum of a model
problem for the Orr~---~Sommerfeld equation with Poiseuille profile. Izv. Math.
2002. V.~66(4). P.~829--856.

\bibitem{Stepin1}
S.A. Stepin. A model of transition from discrete spectrum to continuous one in
singular perturbation theory. Fundam. Prikl. Mat. V.~3(4). P.~1199--1227 (in Russian).

\bibitem{Stepin2}
S.A. Stepin, V.A. Titov. On the concentration of spectrum in the model problem
of singular perturbation theory. Dokl. Math. 2024. V.~75(2). P.~197--200.

\bibitem{Morawetz}
C.S. Morawetz. The eigenvalues of some stability problems involving
viscosity. J. Rat. Mech. Anal. 1952. V.~1. P.~579--603.

\bibitem{DyachenkoShkalikov}
A.V. D'yachenko, A.A. Shkalikov. On a model problem for the Orr~---~Sommerfeld equation
with linear profile. Funct. Anal. Appl. 2002. V.~36(3). P.~228--232.

\bibitem{Shkalikov03}
A.A. Shkalikov. Spectral portraits of the Orr~---~Sommerfeld operator with large Reynolds
numbers. J. Math. Sci. 2004. V.~124(6). P.~5417--5441.

\bibitem{Chapman}
S.J. Chapman. Subcritical transition in channel flows.
J. Fluid. Mech. 2007. V.~451. P.~35--97.

\bibitem{PokotiloShkalikov09}
V.I. Pokotilo, A.A. Shkalikov. Semiclassical approximation for a nonself-adjoint
Sturm-Liouville problem with a parabolic potential. Math. Notes. 2009. V.~86(3).
P.~442--446.

\bibitem{IshkinMarv}
Kh.K. Ishkin, R.I. Marvanov. On localization conditions for spectrum of model
operator for Orr-Sommerfeld equation. Ufa Math. J. 2020. V.~12(4). P.~64--77.

\bibitem{EsinaShafar1}
A.I. Esina, A.I. Shafarevich. Quantization conditions on Riemannian surfaces and
the semiclassical spectrum of the Schr\"odinger operator with complex potential. Math.
Notes. 2010. V.~88(2). P.~209--227.

\bibitem{EsinaShafar2}
A.I. Esina, A.I. Shafarevich. Asymptotics of the spectrum and eigenfunctions of the
magnetic induction operator on a compact two-dimensional surface of revolution.
Math. Notes. 2014. V.~95(3). P.~374--387.

\bibitem{Shubin}
G.V. Rozenblyum, M.A. Shubin, M.Z. Solomyak. Spectral theory of differential
operators. Partial differential equations VII. Encycl. Math. Sci. 1994. V.~64. P.~1--261.

\bibitem{RJMP2024} D.I. Borisov, D.M. Polyakov. Uniform spectral asymptotics for Schr\"odinger operator on a
 segment with delta-interaction. Russ.  J. Math. Phys. 2024. V. 31(2). P. 149--161.

\bibitem{LJM2024}
D.I. Borisov, D.M. Polyakov. Spectral asymptotics for nonlocal one-dimensional Schr\"odinger operator with Neumann
condition and translation in free term // Lobachevskii J. Math. 2024. V. 45(2). P. 6476--6482.

\bibitem{Marchenko}
V.A. Marchenko. Sturm~---~Liouville operators and applications. Amer. Math. Soc.,
Providence, RI, 2011.

\bibitem{LS} B. M. Levitan and I. S. Sargsjan. Introduction to Spectral Theory: Selfadjoint Ordinary Differential
Operators. Amer. Math. Soc., Providence, RI, 1975.

\bibitem{Kato}
T. Kato. Perturbation theory for linear operators. Springer-Verlag, Berlin 1976.

\bibitem{Narasimhan}
R. Narasimhan. Analysis on real and complex manifolds. North-Holland. Amsterdam, New-York, Oxford, 1968.

\bibitem{MityaginSiegl}
B. Mityagin, P. Siegl. Local form--subordination condition and Riesz basisness of root systems.
JAMA. 2019. V.~139. P.~83--119.
\end{thebibliography}
\end{document}